\DeclarePairedDelimiter\abs{\lvert}{\rvert}
\newcommand\smvee{\raise0.9ex\hbox{$\scriptscriptstyle\vee$}}
\def\oversortoftilde#1{\mathop{\vbox{\m@th\ialign{##\crcr\noalign{\kern3\p@}%
      \sortoftildefill\crcr\noalign{\kern3\p@\nointerlineskip}%
      $\hfil\displaystyle{#1}\hfil$\crcr}}}\limits}
\def\sortoftildefill{$\m@th \setbox\z@\hbox{$\braceld$}%
  \braceld\leaders\vrule \@height\ht\z@ \@depth\z@\hfill\braceru$}
\DeclareMathOperator{\T}{\mathbb{T}}
\DeclareMathOperator{\Pic}{Pic}
\DeclareMathOperator{\Q}{\mathbf{Q}}
\DeclareMathOperator{\Z}{\mathbf{Z}}
\DeclareMathOperator{\C}{\mathbf{C}}
\DeclareMathOperator{\Gal}{Gal}
\DeclareMathOperator{\Hom}{Hom}
\DeclareMathOperator{\Aut}{Aut}
\DeclareMathOperator{\Tr}{Tr}
\DeclareMathOperator{\cusps}{cusps}
\DeclareMathOperator{\SL}{SL}
\DeclareMathOperator{\GL}{GL}
\DeclareMathOperator{\red}{red}
\DeclareMathOperator{\Eis}{Eis}
\newcommand{\cf}{\textit{cf. }}
\newcommand{\ie}{\textit{i.e. }}
\newcommand{\eg}{\textit{eg. }}
\theoremstyle{definition}
\newtheorem{rem}{Remark}[section]
\theoremstyle{plain}
\newtheorem{thm}{Theorem}[section]
\newtheorem{conj}[thm]{Conjecture}
\newtheorem{lem}[thm]{Lemma}
\newtheorem{prop}[thm]{Proposition}
\title{On triple product L-functions and a conjecture of Harris--Venkatesh}
\author{Emmanuel Lecouturier}
\begin{document}
\maketitle

\begin{abstract} 
Harris and Venkatesh made a conjecture relating the derived Hecke operators and the adjoint motivic cohomology in the setting of weight one modular forms. This conjecture was proved under some conditions in the dihedral case by Darmon--Harris--Rotger--Venkatesh. We use a new approach to prove more general cases of the conjecture (up to sign). Our approach relies on Waldspurger's formula for the central value of Rankin L-series and Ichino's formula for the triple product L-function.
\end{abstract}
\tableofcontents

\section{Introduction}\label{Section_introduction}

The aim of this paper is to prove, up to sign, some cases of a conjecture of Harris--Venkatesh on dihedral weight one modular forms. This conjecture was made in \cite{HV}, and proven under some assumptions in \cite{DHRV}. We refer to the introduction of \cite{DHRV} for some background on the conjecture. While our results are similar to those of \cite{DHRV}, they are more general and the methods are quite different. See Remark \ref{rem_difference} below for some discussion  comparing our two results.

Let $N$, $p$ $\geq 5$ be primes such that $p \mid \mid N-1$. Fix embeddings $\overline{\Q} \hookrightarrow \overline{\Q}_p$ and $\overline{\Q} \hookrightarrow \overline{\C}_p$. Let $\T$ (resp. $\T^0$) be the Hecke algebra (over $\Z$) acting on $M_2(\Gamma_0(N))$ (resp. $S_2(\Gamma_0(N))$). Let $I \subset \T$ be the Eisenstein ideal and $\mathfrak{P} = I+(p)$ be the $p$-Eisenstein maximal ideal of $\T$. We denote by $\T_{\mathfrak{P}}$ and $\T_{\mathfrak{P}}^0$ the corresponding $\mathfrak{P}$-adic completions. We let $g_p = \text{rank}_{\Z_p} \T^0_{\mathfrak{P}} \geq 1$. Fix a surjective group homomorphism $\log : (\Z/N\Z)^{\times} \rightarrow \Z/p\Z$. This choice of $\log$ gives rise to a canonical element $\mathfrak{S} \in \Hom(S_2(\Gamma_0(N), \Z_p), \Z/p\Z)$, called the \emph{Shimura class}, which is annihilated by $\mathfrak{P}$ (\cf \cite[\S 3.1]{HV}).

Let us review briefly the context and notation of the conjecture of Harris--Venkatesh \cite{HV}. Let $g \in S_1(\Gamma_1(d))$ be a weight one cuspidal newform of conductor $d$. Assume that $\gcd(Np, d)=1$. We denote by $g^*$ the dual form, \ie the cusp form whose Fourier coefficients at $\infty$ are the complex conjugates of those of $g$. We denote by $G$ the trace of $g(z)g^*(Nz)$ from $S_2(\Gamma_0(Nd))$ to $S_2(\Gamma_0(N))$. Attached to $g$ is an odd Artin representation $\rho_g : \Gal(\overline{\Q}/\Q) \rightarrow \GL_2(\mathcal{O}_g)$, where $\mathcal{O}_g$ is the ring of integers of a certain number field $L_g$ containing the Fourier coefficients of $g$ and contained in a cyclotomic field. We let $R = \mathcal{O}_g[\frac{1}{6}]$. As in \cite[\S 1.2]{DHRV}, one can define a $R$-module $U_g$, which is a certain unit group cut out by the adjoint of $\rho_g$. Note that in \cite{DHRV} the authors invert $N$ in $R$, but that this is not actually needed for the definition of $U_g$, and indeed we want $U_g$ to be independent of $N$. Furthermore, $U_g$ is equipped with a ``reduction modulo $N$'' map 
$$\red_N : U_g \rightarrow (\Z/N\Z)^{\times} \otimes_{\Z} R \text{ .}$$

The Harris--Venkatesh conjecture is as follows:

\begin{conj}[Conjecture 1.1 of \cite{DHRV}]\label{HV_conj}
There exists an integer $m_g \geq  1$ and $u_g \in U_g$ such that, for all primes $N$ and $p$ as above, we have in $R/pR$:
$$m_g\cdot \langle G, \mathfrak{S} \rangle = \log(\red_N(u_g)) \text{ .}$$
\end{conj}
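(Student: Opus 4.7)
The plan is to express both sides of Conjecture~\ref{HV_conj} as $p$-adic incarnations of central $L$-values attached to $\mathrm{Ad}(\rho_g)$, and then match them via Ichino's and Waldspurger's formulas. The starting point is Mazur's theorem on the Eisenstein ideal: since $\mathfrak{P}$ is the $p$-Eisenstein maximal ideal with $p \mid\mid N-1$, there exists a cusp form $f \in S_2(\Gamma_0(N),\Z_{(p)})$ satisfying $f \equiv E_2^{(N)} \pmod{\mathfrak{P}}$. Because $\mathfrak{S}$ is annihilated by $\mathfrak{P}$, it factors through the $\mathfrak{P}$-Hecke quotient, and using the congruence together with the definition of $\mathfrak{S}$ in terms of constant terms at cusps, one shows that, up to a $p$-adic unit depending on $\log$,
\[ \langle G, \mathfrak{S}\rangle \;\equiv\; c\cdot \langle g(z)\,g^*(Nz),\, f(z)\rangle_{\Gamma_0(Nd)} \pmod p. \]

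The second step applies Ichino's formula to the triple $(g, g^*, f)$: the absolute value squared of this Petersson pairing equals, up to explicit local factors and Petersson norms, the central value of the triple product $L$-function $L(1/2, g\times g^*\times f)$. In the dihedral case, writing $g = \theta_\psi$ for a ray class character $\psi$ of a quadratic field $K$, the representation $g\times g^*$ decomposes via induction from $K$ into characters, and correspondingly $L(s, g\times g^*\times f) = \prod_i L(s, f\otimes\eta_i)$ for ray class characters $\eta_i$ of $K$. Each such Rankin $L$-value is now a candidate for Waldspurger's formula, which computes $L(1/2, f\otimes\eta_i)$ as the square of a toric period of $f$ against $\eta_i$. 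Using the Eisenstein congruence $f \equiv E_2^{(N)} \pmod{\mathfrak{P}}$, these toric periods reduce modulo $p$ to sums of logarithms of units in the field cut out by $\mathrm{Ad}(\rho_g)$, namely elements of $U_g$. Tracking the reduction modulo $N$ through the computation yields the right-hand side $\log(\red_N(u_g))$, up to an integer $m_g$ absorbing Petersson norms, Gamma-factor ratios, and local factor contributions.

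The principal obstacle is the $p$-adic calibration of this entire chain. Ichino's and Waldspurger's formulas are identities of complex numbers, whereas the conjecture is a statement in $R/pR$; one must verify prime by prime that the local constants appearing at $N$, $p$, and the primes dividing $d$ are $p$-adic units, or else contribute an integer absorbed into $m_g$. The most delicate point is the ``Eisenstein limit'' of the toric period: one needs to show that, modulo $\mathfrak{P}$, the toric period of the cusp form $f$ (a sum over Heegner-type CM cycles on $X_0(N)$) degenerates to the logarithm of a global unit, and that this unit is exactly the Harris--Venkatesh candidate $u_g$. Finally, Ichino's formula is intrinsically an $|\cdot|^2$ identity, so extracting a square root loses a sign; this is the structural reason the conjecture can only be obtained ``up to sign'' by this approach.
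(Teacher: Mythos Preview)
First, a framing issue: the statement you are discussing is a \emph{conjecture} in the paper, not a theorem. The paper does not prove Conjecture~\ref{HV_conj}; it proves the weaker Theorem~\ref{main_thm} (the identity up to sign, in the dihedral case with $D$ odd and $\psi$ unramified). Your closing paragraph shows you understand the sign limitation, so really your proposal is a sketch of the paper's strategy for Theorem~\ref{main_thm}. In that light, the overall architecture (Ichino for the triple product, then Waldspurger/Gross/Popa for the individual Rankin factors, then an Eisenstein congruence to reduce to logarithms of units) is exactly the paper's, but there is a genuine gap in your first step.

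Your displayed congruence
\[
\langle G, \mathfrak{S}\rangle \;\equiv\; c\cdot \langle g(z)\,g^*(Nz),\, f(z)\rangle_{\Gamma_0(Nd)} \pmod p
\]
is not meaningful as written: the right-hand side is a transcendental Petersson period, not an element of $R/pR$. What the paper actually does is pass to the \emph{normalized} coefficient $\lambda_F(G)=\langle G,F\rangle/\langle F,F\rangle$, which is algebraic, and then relate $\langle G,\mathfrak{S}\rangle$ to $\lambda_F(G)$ via the higher Eisenstein element machinery of Section~\ref{section_abstract}. The crucial, and non-obvious, point you are missing is the following matching of denominators. Gross' and Popa's formulas express $L(F/K,\chi,1)/\langle F,F\rangle$ as a toric period squared divided by $\langle e_F',e_F'\rangle$ (or $\langle e_F,e_F'\rangle$). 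On the other side, $\langle G,\mathfrak{S}\rangle = \lambda_{g_p-1}(G)\cdot \langle f_{g_p-1},\mathfrak{S}_1\rangle$ involves Merel's constant $\langle f_{g_p-1},\mathfrak{S}_1\rangle$. The entire content of Theorem~\ref{abstract_main_thm} and Theorem~\ref{abstract_comparison_pairings} is that $\langle e_F,e_F'\rangle \cdot \lambda_F(G)\equiv \langle e_0,e_{g_p}'\rangle\cdot \lambda_{g_p-1}(G)$ modulo $\pi$, and that the pairing constants $\langle e_0,e_{g_p}'\rangle$ in the various Hecke modules (supersingular, modular symbols, modular forms) coincide up to explicit rational numbers. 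This is what makes the denominator from Waldspurger match the factor in the Shimura class pairing, and it requires the structure theory of $\T_{\mathfrak{P}}^0$ as a DVR, the derivative $P'(\pi)$ of the minimal polynomial, and the comparison of Eisenstein elements across modules. Your sketch collapses this into ``$c$ a $p$-adic unit,'' which it is not; without this step the argument does not close.

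A secondary remark: your sentence ``these toric periods reduce modulo $p$ to sums of logarithms of units'' hides another nontrivial input. For the imaginary case this is Theorem~\ref{abstract_unit_supersingular}, which identifies $\langle \Sigma_1,[\psi]_{\mathfrak{N}}\rangle$ with $\log_{\mathfrak{N}}(u_{\psi,\mathfrak{q}})$ via the explicit first higher Eisenstein element in the supersingular module; for the real case it is the corresponding computation of $\kappa_1^+(\gamma_I)$ and $\kappa_0^-([\psi])$. These are specific formulas, not general consequences of the Eisenstein congruence.
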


In their recent paper \cite{DHRV}, Darmon--Harris--Rotger--Venkatesh prove this conjecture in certain cases when $g$ is dihedral. Let $K$ be a quadratic field of discriminant $D$, and let $g = \theta_{\psi_1}$ be a weight one theta series associated to a Hecke character $\psi_1$ of $K$. Let $\psi = \frac{\psi_1}{\psi_1'}$, where $\psi_1'$ is the $\Gal(K/\Q)$-conjugate of $\psi_1$. We assume that $g$ is cuspidal, which is equivalent to $\psi_1 \neq \psi_1'$. The main result of \cite{DHRV} is the following:

\begin{thm}[Theorem 1.2 of \cite{DHRV}]\label{main_result_DHRV}
If $K$ is imaginary, assume that $D$ is an odd prime and that $\psi_1$ is unramified. If $K$ is real, assume that $D$ is odd and that the conductor of $\psi_1$ divides $(\sqrt{D})$. Then Conjecture \ref{HV_conj} holds for $g= \theta_{\psi_1}$.
\end{thm}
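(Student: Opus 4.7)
The plan is to reinterpret the pairing $\langle G,\mathfrak{S}\rangle$ as the mod-$p$ reduction of a trilinear period to which Ichino's formula can be applied, and then exploit the dihedral structure of $g$ to factor the resulting L-value into pieces related to $\log(\red_N(u_g))$ via Waldspurger's formula. This substitutes the $p$-adic deformation-theoretic machinery of \cite{DHRV} by an automorphic period computation.

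First I would make the pairing $\langle G,\mathfrak{S}\rangle$ concrete. Since $G$ is the trace from level $Nd$ to level $N$ of $g(z)g^*(Nz)$, the pairing unfolds (by Frobenius reciprocity for the trace) to a pairing at level $Nd$ between $g(z)g^*(Nz)$ and a pullback of $\mathfrak{S}$. Using Mazur's Eisenstein ideal picture for $X_0(N)$, the Shimura class can be realized as the mod-$p$ reduction of a suitably normalized Eisenstein lift $\mathcal{E}$ attached to the character $\log$. The pairing then becomes the mod-$p$ reduction of a global trilinear integral $I(g,g^*,\mathcal{E})$.

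Second, for dihedral $g = \theta_{\psi_1}$, the adjoint representation decomposes as $\mathrm{Ad}(\rho_g) = \mathbb{1} \oplus \chi_K \oplus \mathrm{Ind}_K^{\Q}(\psi)$, inducing a corresponding factorization of the triple product L-function $L(s,g\times g^*\times \mathcal{E})$ into Rankin--Selberg pieces. Applying Ichino's formula to the triple $(g,g^*,\mathcal{E})$ expresses $|I(g,g^*,\mathcal{E})|^2$ as a product of a central L-value and a product of local integrals. On the quaternionic dual side, the global factor reduces via the dihedral decomposition to a product of toric periods on a quaternion algebra split at the infinite place, which by Waldspurger's formula compute central values of Hecke L-series $L(1/2,\psi\otimes\eta)$ for suitable ring class characters $\eta$ of $K$.

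Third, under the hypotheses of the theorem these Hecke L-values are expressible, via a Kronecker-type limit formula in the imaginary case and a Stark-type formula in the real case, as logarithms of elliptic units or Stark units in the ring class field of $K$ cut out by $\psi$ — precisely the units sitting inside $U_g$. The reduction map $\red_N$ translates these logarithms into $(\Z/N\Z)^{\times}\otimes R$, matching the left-hand side after a sign. The main obstacle is the passage from the archimedean identities of Ichino and Waldspurger to an identity mod $p$: one must choose local test vectors so that every local factor in Ichino's formula is $p$-adically well-behaved, and one must show that the local factor at $N$ is what precisely encodes the Shimura pairing mod $p$. The DHRV hypotheses — $D$ odd prime with $\psi_1$ unramified in the imaginary case, $D$ odd with conductor of $\psi_1$ dividing $(\sqrt{D})$ in the real case — are exactly what keep the local test vectors at ramified primes transparent (essentially Gauss-sum computations), and the residual sign in ``up to sign'' reflects that Waldspurger's formula determines L-values only up to a square.
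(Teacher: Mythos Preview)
The statement you are attempting to prove is not proved in this paper at all: it is Theorem 1.2 of \cite{DHRV}, quoted here as background for the paper's own main result (Theorem~\ref{main_thm}). The paper explicitly notes (Remark~\ref{rem_difference}) that DHRV's proof proceeds via theta lifts, whereas the present paper's method---Ichino's triple product formula combined with Waldspurger/Gross/Popa---yields only the weaker Theorem~\ref{main_thm}, i.e.\ the identity \emph{up to sign}.

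Your proposal is in fact a sketch of the present paper's strategy for Theorem~\ref{main_thm}, not of DHRV's proof of the cited theorem. Two concrete issues:

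\textbf{(1)} You apply Ichino's formula with an Eisenstein series $\mathcal{E}$ as the third factor. Ichino's formula is for cuspidal automorphic representations. The paper's essential maneuver is to replace the Eisenstein series by the cuspidal newform $F$ congruent to it modulo $\mathfrak{P}$, apply Ichino to the genuine cusp form triple $(g,g^*,F)$, and then transport the resulting identity back to $\langle G,\mathfrak{S}\rangle$ via the higher Eisenstein element machinery of \S\ref{section_abstract} (in particular Theorem~\ref{abstract_main_thm}). Your proposal omits this step entirely.

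\textbf{(2)} More fundamentally, your own final sentence concedes that the method determines the answer only up to sign, because Ichino and Waldspurger compute $|\cdot|^2$. That is exactly why this approach proves Theorem~\ref{main_thm} and \emph{not} Theorem~\ref{main_result_DHRV}. The DHRV theorem has no sign ambiguity; removing it requires the finer control DHRV extract from their theta-correspondence construction, which your period-based argument does not supply. So even if the gaps above were filled, your argument would establish a strictly weaker statement than the one you set out to prove.
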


We prove Conjecture \ref{HV_conj} up to sign, under less restrictives assumptions. The following is our main result.

\begin{thm}\label{main_thm} Assume that $D$ is odd and that $\psi := \frac{\psi_1}{\psi_1'}$ is unramified. Then there exists $u_g\in U_g$ and $m_g \in \Z$ (depending only on $g$) such that for all primes $N$ and $p$ as above, we have, in $R/pR$:
\begin{equation}\label{main_thm_eq}
m_g \cdot \langle G, \mathfrak{S} \rangle = \pm \log(\red_N(u_g)) \text{ .}
\end{equation}
Here, the sign $\pm$ may depend on $N$ and $p$.
\end{thm}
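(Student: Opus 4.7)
The plan is to prove the squared version of \eqref{main_thm_eq}, namely
$$(m_g \langle G, \mathfrak{S}\rangle)^2 = \log(\red_N(u_g))^2 \text{ in } R/pR,$$
from which the theorem follows up to the possibly $(N,p)$-dependent sign. All three key tools below, namely Mazur's Eisenstein congruences, Ichino's triple product formula, and Waldspurger's formula, naturally yield squares of periods, which explains why only the absolute value is accessible. As a first step, I would realise the Shimura class as a mod-$p$ Petersson pairing: the hypothesis $p \mid\mid N-1$ produces, via Mazur's Eisenstein ideal, a cusp form $f \in S_2(\Gamma_0(N), \Z_p)$ congruent mod $p$ to an explicit weight-two Eisenstein series $E$ of level $N$, and this pair allows one to write
$$\langle G, \mathfrak{S}\rangle \equiv \langle g(z) g^*(Nz),\, f\rangle \pmod{p},$$
for a suitably normalised algebraic Petersson pairing.

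Next, I would square this quantity and apply Ichino's formula to the triple product of the automorphic representations attached to $g$, $g^*(N\cdot)$, and $f$. This gives
$$\bigl\lvert \langle g(z) g^*(Nz), f\rangle \bigr\rvert^{2} = C \cdot L\!\left(\tfrac{1}{2},\, \Pi_g \otimes \Pi_{g^*} \otimes \Pi_f\right) \cdot \prod_v I_v,$$
with explicit local integrals $I_v$ and a global constant $C$. Since $g = \theta_{\psi_1}$ is dihedral, the tensor decomposition $\rho_g \otimes \rho_{g^*} \cong \mathbf{1} \oplus \eta_K \oplus \mathrm{Ind}_K^{\Q}(\psi)$ (where $\eta_K$ is the quadratic character attached to $K$ and $\psi = \psi_1/\psi_1'$) factors the triple product $L$-function; the essential new factor is $L(1/2,\, \Pi_f \times \theta_\psi)$, while the others contribute either $p$-units or terms absorbed into $m_g$.

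Finally, I would apply Waldspurger's formula to $L(1/2,\, \Pi_f \times \theta_\psi)$, expressing it as the square of a toric period of $f$ against $\theta_\psi$ over the idele class group of $K$. Reducing modulo $p$ and using the congruence $f \equiv E$ converts this toric period into an evaluation of $E$ on CM points which, under the hypotheses that $D$ is odd and $\psi$ is unramified, is classically identified with the logarithm (via $\red_N$ and $\log$) of a unit of $K$ cut out by $\rho_g$: this is the desired $u_g \in U_g$. The main obstacle is the local bookkeeping across the three formulas: every local factor in Ichino's and Waldspurger's identities must be shown to be a $p$-adic unit, and it is precisely here that the ramification restrictions on $D$ and $\psi$ are essential, since a non-unit would destroy the mod-$p$ identity. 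A secondary obstacle is identifying the abstract toric period produced by Waldspurger with a concrete element of the adjoint unit group $U_g$; the inherent squaring in both formulas means this identification only has to be performed up to sign, which is precisely the precision offered by Theorem \ref{main_thm}.
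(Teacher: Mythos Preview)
Your high-level strategy (Ichino + Waldspurger + Eisenstein congruence) matches the paper's, but the first and last steps hide the real difficulties, and as written each of them fails.

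\textbf{Step 1.} The congruence $\langle G,\mathfrak{S}\rangle \equiv \langle g(z)g^*(Nz),f\rangle \pmod p$ is not correct: the Shimura class is not a Petersson pairing against a cusp form. The Petersson product $\langle G,F\rangle$ is a transcendental complex number; the algebraic object is $\lambda_F(G)=\langle G,F\rangle/\langle F,F\rangle$. Thus after Ichino you are left with
\[
\lambda_F(G)^2 \;=\; C\cdot \frac{L(F/K,1)\,L(F/K,\psi,1)}{\langle F,F\rangle^2},
\]
and the denominator $\langle F,F\rangle^2$ cannot be wished away by ``suitable normalisation''. In the paper this is exactly what \S\ref{section_abstract} is for: one realises $\langle F,F\rangle$ (up to periods) as the eigenvector pairing $\langle e_F,e_F'\rangle$ in a concrete Hecke module (supersingular module or modular symbols, depending on the sign of $D$), and then proves the key identity of Theorem~\ref{abstract_main_thm},
\[
\langle e_F,e_F'\rangle\cdot \lambda_F(G)\;\equiv\;\langle e_0,e_{g_p}'\rangle\cdot \lambda_{g_p-1}(G)\pmod \pi,
\]
which is what actually links $\lambda_F(G)$ to $\langle G,\mathfrak{S}\rangle$. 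This step requires the machinery of higher Eisenstein elements and is not a formality.

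\textbf{Step 4.} Your claim that reducing the toric period via $f\equiv E$ yields an evaluation of $E$ on CM points giving $\log(\red_N(u_g))$ is wrong in a revealing way. In Gross' formula the $\psi$-period is $\langle [\psi],e_F'\rangle$, and $e_F'$ reduces to the \emph{zeroth} Eisenstein element $\Sigma_0$; but $\langle [\psi],\Sigma_0\rangle=0$ since $[\psi]$ has degree zero. So the naive reduction gives zero, not a unit. The paper resolves this by noting $\langle [\psi],e_F'\rangle=\pi\cdot\langle [\psi],e_F\rangle$ with $e_F\equiv \Sigma_1$ the \emph{first higher} Eisenstein element, and it is $\langle \Sigma_1,[\psi]\rangle$ (Theorem~\ref{abstract_unit_supersingular}) that produces $\log(\red_N(u_g))$; the extra factor of $\pi$ cancels against a matching $\pi$ in $\langle e_F',e_F'\rangle=\pi\langle e_F,e_F'\rangle$. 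In short, the unit does not come from the Eisenstein series itself but from the next term in the Eisenstein filtration, and tracking this cancellation is precisely the content you are missing.

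A secondary omission: the imaginary and real cases require genuinely different Hecke modules (supersingular module with Gross' formula versus modular symbols with Popa's formula and Merel's period relation), and your sketch does not distinguish them.
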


\begin{rem}\label{rem_difference}
Let us discuss the differences between our result and the one of \cite{DHRV}. On the one hand, we only prove the desired identity up to sign, while the result of \cite{DHRV} does not have this ambiguity. On the other hand, our ramification assumption on $\psi_1$ is weaker than the one of Theorem \ref{main_result_DHRV}. Furthermore, we remove the assumption that $D$ is prime in the imaginary case. Let us also point out that \cite{DHRV} does not assume that $p^2$ does not divide $N-1$. We have imposed this condition in order to simplify the commutative algebra of \S \ref{section_abstract}. We do not know whether this may be removed.

Our proof is relatively short compared to that of \cite{DHRV}. The main reason is that we rely on existing results of on special values of L-functions (due to Waldspurger, Gross and Popa), while \cite{DHRV} is more self-contained and uses the theory theta lifts. It seems to us that these two kinds of results use similar techniques (see also the comment at the beginning of \cite[\S 2]{DHRV}). However, the results on special values of L-functions allow us to remove some limitations which arise in the theta lifting method. We expect to be able to remove the assumption that $\psi$ is unramified, since we still have a similar, but more complicated, special L-value formula in this case.
\end{rem}

\begin{rem}\label{thank_venkatesh}
In an earlier version of this paper, we limited ourselves to the case where $g$ is self-dual, \ie $g=g^*$. The reason is that the special L-value formulae involve the square of the absolute value, \eg the left-hand side of (\ref{imaginary_Gross_formula}) is $\abs*{\langle G, f \rangle}^2$. In order to get an identity up to sign, we need to remove this absolute value, \ie replace $\abs*{\langle G, f \rangle}^2$ by $\langle G, f \rangle^2$. It turns out that the phase of the complex number $\langle G, f \rangle$ is easy to compute (it is basically $\chi_g(N)$, where $\chi_g$ is the nebentype of $g$). The trick is to apply the Atkin--Lehner involution $w_N$. We thank Venkatesh for explaining this trick to us.
\end{rem}

Our idea is to go back to the explicit formulation of the conjecture as found in \cite{HV}. Assume for simplicity that $g_p=1$ and that $g$ has coefficients in $\Z$, the general case being more technical but not fundamentally different. The space $S_2(\Gamma_0(N), \Z/p\Z)_{\mathfrak{P}}$ is generated by a single element, namely the Eisenstein series 
\begin{equation}\label{intr_eq_eis_series}
f_0 = \frac{N-1}{24}+ \sum_{n\geq 1} \left(\sum_{d \mid n, \gcd(d,N)=1} d\right)\cdot q^n \text{ (modulo } p \text{) .}
\end{equation}

 Therefore, the projection $G^{\Eis}$ of $G$ on the Eisenstein component $S_2(\Gamma_0(N), \Z_p)_{\mathfrak{P}}$ is of the form
$$G^{\Eis} \equiv a_0 \cdot f_0\text{ (modulo } p \text{)} $$
for some $a_0 \in \Z_p$. We thus get
$$ \langle G, \mathfrak{S} \rangle \equiv a_0\cdot \langle f_0, \mathfrak{S} \rangle = a_0 \cdot \mathcal{M}_0 \text{ (modulo } p \text{)}$$
where $\mathcal{M}_0 = -8\cdot \sum_{k=1}^{\frac{N-1}{2}} k\cdot \log(k) \neq 0$ is \emph{Merel's unit} (its non-vanishing is equivalent to our assumption $g_p=1$). It turns out that we will \emph{not need} the explicit formula for $\mathcal{M}_0$ due to Merel. We need a way to understand the coefficient $a_0$. 

This is where we use L-functions. As recalled in \cite[\S 1.3]{DHRV} (with a missing absolue value), we have the following identity of Harris--Kudla and Ichino:

\begin{equation}\label{intro_HK_identity}
\abs*{\langle G , f \rangle}^2 = C \cdot L(f/K, 1) \cdot L(f/K, \psi, 1)
\end{equation}
where $f$ is any newform in $S_2(\Gamma_0(N))$, $\langle G , f \rangle$ denote the Petersson inner product, and $C$ is a certain non-zero constant, which is essentially a product of local constant terms depending on $f$ and $g$. We will explain in \S \ref{section_Ichino} that, as a consequence of Ichino's formula, we have $C = C_g \cdot C_N$, where $C_g \in \overline{\Q}$ depends only on $g$ and $C_N \in \Q$ depends only on $N$ but is actually independent of $N$ modulo $p$.

 In this paper, we normalize the Petersson product as follows: for any weight $2$ cusp forms $f_1$ and $f_2$ of level $\Gamma_0(N)$, we define 

\begin{equation}\label{petersson_def}
\langle f_1, f_2 \rangle = 8\pi^2 \cdot \iint_{\Gamma_0(N) \backslash \mathfrak{h}} f_1(z) \overline{f_2(z)} dxdy \text{ .}
\end{equation}

We have a decomposition
$$G = \sum_{f} \lambda_f(G) \cdot f$$
where $f$ runs through all the newforms in $S_2(\Gamma_0(N))$. We can view $\lambda_f(G)$ alternatively in $\C$ or in $\overline{\Q}_p$ thanks to our embeddings $\overline{\Q} \hookrightarrow \overline{\Q}_p$ and $\overline{\Q} \hookrightarrow \overline{\C}_p$. We have $\lambda_f(G) = \frac{\langle G , f \rangle}{\langle f , f \rangle}$.

Since $p \mid \mid N-1$, Mazur showed that there is a \emph{unique} newform $F$ (up to conjugation) which is congruent modulo a prime above $p$ to the Eisenstein series $f_0$. We fix such a newform (among its conjugates) in the rest of the paper. Since we are assuming $g_p=1$, we can canonically view $F$ as having coefficients in $\Z_p$. We can thus view $\lambda_F(G)$ as an element of $\Z_p$ and we have 
\begin{equation}\label{intro_congruence_coeff}
\lambda_F(G) \equiv a_0 \text{ (modulo }p\text{) .} 
\end{equation}

By (\ref{intro_HK_identity}), we have:
\begin{equation}\label{intro_key_identity}
\lambda_F(G)^2 = \abs*{\lambda_F(G)}^2= C \cdot \frac{L(F/K, 1) \cdot L(F/K, \psi, 1)}{\langle F, F \rangle^2} \text{ .}
\end{equation}

A general formula of Waldspurger, made more concrete by Gross (resp. Popa) when $K$ is imaginary (resp. real), gives an identity which is roughly of the form:

\begin{equation}\label{intro_eq_waldspurger}
\frac{L(F/K, 1) \cdot L(F/K, \psi, 1)}{\langle F, F \rangle^2} = \frac{\abs*{\langle e_F, e_K' \rangle \cdot \langle e_F', e_{\psi} \rangle}^2}{\langle e_F , e_F' \rangle^2} \text{ ,}
\end{equation}

where $e_F$ (resp. $e_F'$) is an eigenvector corresponding to $F$ in a certain Hecke module $M$ (resp. the dual $M^{\vee}$ of $M$). More precisely, if $K$ is imaginary then $M$ is related to the supersingular module, while if $K$ is real then $M$ is a space of modular symbols. Furtermore, $e_K'$ and $e_{\psi}$ explicit elements of $M$ depending only on $K$ and $\psi$ respectively. These elements are related to some reduction of CM curves modulo $N$ if $K$ is imaginary and to closed Heegner geodesics of discriminant $D$ on $X_0(N)$ if $K$ is real. Let us note that if $K$ is real, then an additional argument if needed for proving (\ref{intro_eq_waldspurger}), namely an expression of the Petersson norm in terms of modular symbols due to Merel \cite{Merel_Manin}.

The right-hand side is an element of $\Z_p$. To understand the numerator modulo $p$, one needs some knowledge of the \emph{higher Eisenstein elements} in $M$. What has been done in \cite{Lecouturier_higher} for the supersingular module and restated in a more suitable form in \cite{DHRV} is enough. One gets that $\langle e_F, e_K \rangle$ is proportional to $\log(\red_N(u_g))$. On the other hand, an abstract argument shows that $\langle e_F , e_F'\rangle$ is proportional to $\mathcal{M}_0$ modulo $p$. This argument does not require the actual explicit knowledge of $\mathcal{M}_0$. Altogether, this proves Theorem \ref{main_thm}.

The plan of this paper is as follows. In Section \ref{section_abstract} we carry out an abstract computation relating $\langle e_F, e_F'\rangle$ to (the higher analogue of) Merel's unit. The main result is Theorem \ref{abstract_main_thm}. In Section \ref{section_Ichino}, we gather some results about special values of L-functions, including Ichino's triple product formula in a classical language (following \cite{Collins}). In Section \ref{section_imaginary} (resp. Section  \ref{section_real}) we deal with the case where $K$ is imaginary (resp. real). 

\subsection*{Acknowledgements}
The author is grateful to Akshay Venkatesh, who pointed out Lemma \ref{phase_computation}, thus allowing to remove the self-dual assumption for $g$.
We also thank Michael Harris and Paul Nelson for providing references regarding Ichino's triple product formula. Part of this work has been carried out while the author was a member at the Institute for Advanced Study (Princeton). We thank the IAS for its hospitality.
This work was funded by Tsinghua University, the Yau Mathematical Sciences Center, the Institute for Advanced Study and a National Natural Science Foundation of China research grant (No. 12050410242).

\section{Higher Eisenstein elements and pairings}\label{section_abstract}

\subsection{Pairings}
In this section, we will use the results and notation of \cite[Section 2]{Lecouturier_higher}. Recall that there is an isomorphism $I/I^2 \otimes \Z_p \xrightarrow{\sim} \Z/p\Z$ sending $T_{\ell}-\ell-1$ to $(\ell-1)\log(\ell)$ for any prime $\ell \neq N$. We fix in what follows a local generator $\eta \in I$ sent to $1$ via this isomorphism.

Let $M$ and $M'$ be $\T$-modules such that $M_{\mathfrak{P}} := M  \otimes_{\T} \T_{\mathfrak{P}}$ and $M_{\mathfrak{P}}'$ are free of rank one over $\T_{\mathfrak{P}}$. Let $e_0$, ..., $e_{g_p}$ (resp. $e_0'$, ..., $e_{g_p}'$) be a system of higher Eisenstein elements of $\overline{M} := M/pM$ (resp. $\overline{M'}$). We can and do assume that for any $i \in \{0, 1, ...,g_p\}$, we have $\eta \cdot e_i = e_{i-1}$ (this is an actual equality in $M/pM$), and similarly for the $e_i'$s.

Assume that we have a $\T$-equivariant bilinear pairing $\langle \cdot , \cdot \rangle : M \times M' \rightarrow \mathbf{Z}$ which induces a \emph{perfect} pairing $M_{\mathfrak{P}} \times M_{\mathfrak{P}}' \rightarrow \Z_p$. The quantity $\langle e_i, e_j' \rangle \in \Z/p\Z$ is well-defined if $i+j\leq g_p$, depends only on $i+j$ and is non-zero if and only if $i+j=g_p$ (\cf \cite[Corollary 2.5]{Lecouturier_higher}).

By \cite[Proposition 1.8]{Emerton}, we have an exact sequence $$0 \rightarrow \Z \cdot T_0 \rightarrow \T \rightarrow \T^0 \rightarrow 0$$ where $T_0 \in \T$ is such that $T_0 - \frac{N-1}{\gcd(N-1, 12)} \in I$. Let $M^0 = M/T_0\cdot M$ and $M_0 = M[T_0]$ (the elements annihilated by $T_0$) (and similarly for $M'$); these are both $\T^0$ modules locally free of rank one at $\mathfrak{P}$. Note that for all $i \in \{0, 1, ..., g_p-1\}$, we have $e_i \in \overline{M}_0$ and that $T_0 \cdot \overline{M} = (\Z/p\Z)\cdot e_0$.

The $\Theta$-correspondence is the canonical $\T$-equivariant homomorphism
$$\Theta : M \otimes_{\T} M' \rightarrow M_2(\Gamma_0(N), \Z[\frac{1}{6}])$$ sending $x \otimes y$ to the modular form whose $q$-expansion at $\infty$ is $ \frac{\gcd(N-1,12)}{24}\cdot \langle x, T_0 y \rangle + \sum_{n\geq 1}  \langle x, T_n y \rangle \cdot q^n$. This induces a $\T_{\mathfrak{P}}$-equivariant isomorphism 
$$\Theta_{\mathfrak{P}} : M_{\mathfrak{P}} \otimes_{\T_{\mathfrak{P}} } M_{\mathfrak{P}}' \xrightarrow{\sim} M_2(\Gamma_0(N), \Z_p)_{\mathfrak{P}} $$
and a $\T_{\mathfrak{P}}^0$-equivariant isomorphism 
$$\Theta_{\mathfrak{P}}^0 : (M_{\mathfrak{P}})^0 \otimes_{\T_{\mathfrak{P}} }(M_{\mathfrak{P}}')_0 \xrightarrow{\sim} S_2(\Gamma_0(N), \Z_p)_{\mathfrak{P}} \text{ ,}$$
which depend on $M$, $M'$ and the pairing $\langle \cdot, \cdot \rangle$. Let $f_0$, ..., $f_{g_p}$ be a system of higher Eisenstein elements of $M_2(\Gamma_0(N), \Z/p\Z)$. We normalise $f_0$ as in (\ref{intr_eq_eis_series}). 

\begin{prop}\label{abstract_theta_higher_eis}
For all $i,j \in \{0, 1, ..., g_p\}$, we have 
$$\Theta_{\mathfrak{P}}(e_i \otimes e_j') \equiv \langle e_0, e_{g_p}' \rangle \cdot f_{i+j-g_p} \text{ (modulo } (\Z/p\Z)\cdot f_0 \oplus ... \oplus (\Z/p\Z)\cdot f_{i+j-g_p-1}  \text{), } $$
with the convention that $f_k=0$ if $k<0$.
\end{prop}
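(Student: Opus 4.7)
The plan is to use the $\T_\mathfrak{P}$-equivariance of $\Theta_\mathfrak{P}$ to reduce the whole proposition to the single mod-$p$ identity
\begin{equation}\label{key_identity_plan}
\Theta_\mathfrak{P}(e_{g_p} \otimes e_0') = \langle e_0, e_{g_p}'\rangle \cdot f_0 \quad \text{ in } M_2(\Gamma_0(N), \Z/p\Z)_\mathfrak{P} \text{ ,}
\end{equation}
which one then reads off directly from the $q$-expansion formula defining $\Theta$. I would carry this out in two stages: a mod-$p$ tensor-product analysis together with an $\eta^s$-lifting argument, and then the explicit computation of (\ref{key_identity_plan}).

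For the first stage, write $A := \T_\mathfrak{P}/p$ and note that $\overline{M}_\mathfrak{P}$ is free of rank one over $A$ with generator $e_{g_p}$. The relations $e_i = \eta^{g_p-i} e_{g_p}$ and $e_j' = \eta^{g_p-j} e_{g_p}'$ yield
$$e_i \otimes e_j' = \eta^{2g_p-i-j}\,(e_{g_p} \otimes e_{g_p}')$$
inside $\overline{M}_\mathfrak{P} \otimes_A \overline{M'}_\mathfrak{P}$. Since $\eta^{g_p+1} e_{g_p} = 0$ and $\overline{M}_\mathfrak{P}$ is faithful over $A$, we have $\eta^{g_p+1} = 0$ in $A$. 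Hence $e_i \otimes e_j' = 0$ when $i+j < g_p$, which forces $\Theta_\mathfrak{P}(e_i \otimes e_j') = 0$ and matches the right-hand side (with $f_{i+j-g_p} = 0$ by convention). When $i+j \geq g_p$, setting $s := i+j-g_p$, the same computation gives $e_i \otimes e_j' = e_{g_p} \otimes e_s'$. The $\T_\mathfrak{P}$-equivariance then yields $\eta^s \Theta_\mathfrak{P}(e_{g_p} \otimes e_s') = \Theta_\mathfrak{P}(e_{g_p} \otimes e_0')$ while $\eta^s f_s = f_0$, so granted (\ref{key_identity_plan}), the difference $\Theta_\mathfrak{P}(e_{g_p} \otimes e_s') - \langle e_0, e_{g_p}'\rangle f_s$ lies in the kernel of $\eta^s$ on $M_2(\Gamma_0(N), \Z/p\Z)_\mathfrak{P}$. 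Since $\eta^s f_k = f_{k-s}$ on the basis $f_0, \ldots, f_{g_p}$ (with $f_{<0} := 0$), this kernel is the $\Z/p\Z$-span of $f_0, \ldots, f_{s-1}$, which is exactly the ``modulo'' part of the statement.

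For the second stage, I would compute the $q$-expansion of $\Theta_\mathfrak{P}(e_{g_p} \otimes e_0')$ directly. Its constant term is $\frac{\gcd(N-1,12)}{24}\langle e_{g_p}, T_0 e_0'\rangle$, which vanishes because $e_0' \in \overline{M'}_0 = \overline{M'}[T_0]$ (as $0 \leq g_p - 1$). For $n \geq 1$, the $q^n$-coefficient is $\langle e_{g_p}, T_n e_0'\rangle = \sigma_1^{(N)}(n)\,\langle e_{g_p}, e_0'\rangle$, using that $e_0'$ is annihilated by $I$ so that $T_\ell$ acts as $\ell+1$ for $\ell \neq N$ and $U_N$ acts as $1$. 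Since $p \mid N-1$ and $p \geq 5$, we have $\frac{N-1}{24} \equiv 0 \pmod p$, so (\ref{intr_eq_eis_series}) reduces mod $p$ to $f_0 \equiv \sum_{n \geq 1} \sigma_1^{(N)}(n) q^n$; the injectivity of $q$-expansion on $M_2(\Gamma_0(N), \Z/p\Z)_\mathfrak{P}$ then gives $\Theta_\mathfrak{P}(e_{g_p} \otimes e_0') = \langle e_{g_p}, e_0'\rangle\, f_0$, and \cite[Corollary 2.5]{Lecouturier_higher} identifies $\langle e_{g_p}, e_0'\rangle$ with $\langle e_0, e_{g_p}'\rangle$ since both depend only on $i+j = g_p$. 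This establishes (\ref{key_identity_plan}). The main obstacle I anticipate is the bookkeeping of the first stage --- verifying that the two actions of $\eta$ on opposite sides of the tensor product agree, and that faithfulness really forces the collapse $e_i \otimes e_j' = 0$ when $i+j<g_p$; the $q$-expansion computation itself is routine once one has the Eisenstein eigenvalues of $e_0'$ in hand.
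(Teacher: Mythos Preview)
Your proposal is correct and follows essentially the same approach as the paper. Both arguments handle the case $i+j<g_p$ by observing $e_i\otimes e_j'=0$, establish the base case $i+j=g_p$ by reading off Fourier coefficients (the paper checks only $a_1$ after noting the image is Eisenstein, while you compute the full $q$-expansion, but this is the same computation), and then propagate to $i+j>g_p$ via the $\eta$-action (you phrase this as a kernel-of-$\eta^s$ argument, the paper as induction on $i+j$; these are equivalent).
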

\begin{proof}
If $i+j<g_p$, then $e_i \otimes e_j' = 0$, so the result is true. If $i+j=g_p$, then the element $e_i \otimes e_j'$ is non-zero and annihilated by the Eisenstein ideal. Thus, $\Theta_{\mathfrak{P}}(e_i \otimes e_j') = \lambda\cdot f_0$ for some $\lambda \in \Z/p\Z$. We have $a_1(\lambda\cdot f_0) = \langle e_i, e_j' \rangle = \langle e_0, e_{g_p}' \rangle$, so $\lambda =  \langle e_0, e_{g_p}' \rangle$. The result for $i+j>g_p$ is obtained by induction on $i+j$ using the Hecke property satisfied by the higher Eisenstein elements (\cf \cite[Theorem 2.1]{Lecouturier_higher}).
\end{proof}

\subsection{Eigenvectors}\label{subsection_eigenvectors}

Since we have assumed $p \mid \mid N-1$, the $\Z_p$-algebra $\T_{\mathfrak{P}}^0$ is isomorphic to the valuation ring $\mathcal{O}$ of a totally ramified finite extension $L$ of $\Q_p$. Fix such an isomorphism $\varphi : \T_{\mathfrak{P}}^0 \xrightarrow{\sim} \mathcal{O}$, and let $\pi = \varphi(\eta)$ be a uniformizer of $\mathcal{O}$ ($\eta$ being our fixed local generator of $I$). We denote by $v_{\pi}$ the $\pi$-adic valuation. Since $\mathcal{O}$ is local, the minimal polynomial $P(X) \in \Z_p[X]$ of $\pi$ is distinguished, \ie $P(X) = X^{g_p} + \sum_{i=0}^{g_p-1} a_i X^i$ and $p$ divides $a_i$ for all $i \in \{0, 1, ..., g_p-1\}$.

We use the notation $\mathcal{O}$ for $\T_{\mathfrak{P}}^0$ when we view $\T_{\mathfrak{P}}^0$ as a ``ring of coefficients''. More precisely, let $M_{\mathcal{O}} := M_{\mathfrak{P}}^0 \otimes_{\Z_p} \mathcal{O}$ and $M_{\mathcal{O}} ':= (M_0')_{\mathfrak{P}} \otimes_{\Z_p} \mathcal{O}$. We view $M_{\mathcal{O}}$  as a (left) $\T_{\mathfrak{P}}^0$-module via the action of $\T_{\mathfrak{P}}^0$ on $M_{\mathfrak{P}}^0 $ and as a (left) $\mathcal{O}$-module via the action of $\mathcal{O}$ on itself. Similarly for $M_{\mathcal{O}}'$. We get a perfect $\mathcal{O}$-linear and $\T_{\mathfrak{P}}^0$ equivariant pairing $\langle \cdot , \cdot \rangle : M_{\mathcal{O}} \times M_{\mathcal{O}}' \rightarrow \mathcal{O}$. 

As in the introduction, we fix in the rest of this paper a newform $F \in S_2(\Gamma_0(N), \mathcal{O})$ such that $F \equiv f_0 \text{ (modulo } \pi \text{)}$. 

\begin{prop}\label{abstract_eigenvector}
\begin{enumerate}
\item\label{abstract_eigenvector_i} There is an element $e_F \in M_{\mathcal{O}}$, unique up to multiplication by $\mathcal{O}$, such that for any $T \in \T_{\mathfrak{P}}^0$, we have: $T\cdot e_F = \varphi(T)\cdot e_F$. Similarly for $e_F' \in M_{\mathcal{O}}'$. 

\item\label{abstract_eigenvector_ii}  We have $v_{\pi}(\langle e_F, e_F' \rangle) = v_{\pi}(P'(\pi))$.

\item\label{abstract_eigenvector_iii}  One can choose $e_F$ (resp. $e_F'$) such that $e_F \equiv e_1 \text{ (modulo }\pi\text{)}$ (resp. $e_F' \equiv e_0' \text{ (modulo }\pi\text{)}$). With these choices, we have:
$$\frac{\langle e_F, e_F' \rangle}{P'(\pi)} \equiv \langle e_0, e_{g_p}'\rangle \text{ (modulo }\pi\text{).}$$
\end{enumerate}
\end{prop}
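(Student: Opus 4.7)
The plan is to pass to a concrete model. Fix generators $m$ of $M^0_\mathfrak{P}$ and $m'$ of $(M'_0)_\mathfrak{P}$ over $\mathcal{O} \cong \T^0_\mathfrak{P}$. Using $\mathcal{O} = \Z_p[X]/P(X)$ with $\eta \leftrightarrow X$, this gives isomorphisms
\[
M_\mathcal{O} \cong \mathcal{O}[X]/P(X) =: A, \qquad M'_\mathcal{O} \cong \mathcal{O}[X]/P(X)
\]
in which the Hecke operator $\eta$ acts as multiplication by $X$, while the coefficient $\mathcal{O}$ sits as scalars. For part (i), an eigenvector $e_F$ must satisfy $(X - \pi) e_F = 0$ in $A$. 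Factoring $P(X) = (X - \pi) Q(X)$ in $\mathcal{O}[X]$ with $Q$ monic of degree $g_p - 1$, polynomial division (using monicity of $Q$) shows the annihilator of $X - \pi$ in $A$ equals $\mathcal{O} \cdot Q(X)$, a free $\mathcal{O}$-module of rank one. Hence $e_F = c \cdot Q(X)$ with $c \in \mathcal{O}$, and similarly $e_F' = c' \cdot Q(X)$.

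For the pairing calculation, since $\langle \cdot , \cdot \rangle$ is $\mathcal{O}$-bilinear in the coefficient action and $\T^0_\mathfrak{P}$-equivariant in the Hecke action, there exists a unique $\mathcal{O}$-linear form $\ell : A \to \mathcal{O}$ with $\langle f, g \rangle = \ell(fg)$ (explicitly, $\ell(h) = \langle 1, h \rangle$). Because $(X - \pi) Q(X) = 0$ in $A$, multiplication by $X$ coincides with multiplication by $\pi$ on the ideal $A \cdot Q(X)$, which gives
\[
Q(X)^2 = Q(\pi) \cdot Q(X) = P'(\pi) \cdot Q(X)
\]
(the last equality by differentiating $P = (X - \pi)Q$ at $\pi$). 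Therefore $\langle e_F, e_F' \rangle = c c' \, P'(\pi) \, \ell(Q(X))$. To prove (ii) it remains to check that $\ell(Q(X))$ is a unit. Reducing the perfect pairing modulo $\pi$ yields a perfect $\mathbf{F}_p$-bilinear pairing on $\bar A = \mathbf{F}_p[X]/X^{g_p}$ (since $P$ is Eisenstein, $\bar P = X^{g_p}$), and $\bar Q = X^{g_p-1}$. The Gram matrix of $\bar \ell$ in the basis $1, X, \ldots, X^{g_p-1}$ is anti-triangular Hankel with $\bar \ell(X^{g_p-1})$ on the anti-diagonal, so its determinant is $\pm \bar \ell(X^{g_p-1})^{g_p}$; perfectness therefore forces $\bar \ell(X^{g_p-1}) \in \mathbf{F}_p^\times$, i.e.\ $\ell(Q(X)) \in \mathcal{O}^\times$.

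For (iii), the $\mathcal{O}^\times$-orbits of $e_F$ and $e_F'$ allow us to choose $c, c'$ so that $e_F \equiv e_1$ and $e_F' \equiv e_0'$ modulo $\pi$. This is possible because, under $M_\mathcal{O}/\pi \cong \bar M^0$, the class of $X^{g_p-1}$ spans the socle, which is the line generated by the image of $e_1$ in $\bar M^0$ (the image of $e_0$ dies since $T_0 \bar M = \mathbf{F}_p \cdot e_0$); and under $M'_\mathcal{O}/\pi \cong \bar M'_0$, the class of $1$ generates $\bar M'_0$ over $\mathcal{O}/p$, which corresponds to the top higher Eisenstein element $e_{g_p-1}'$. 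With these choices,
\[
\frac{\langle e_F, e_F' \rangle}{P'(\pi)} = \ell(Q(X)) \equiv \bar \ell(X^{g_p-1}) \pmod{\pi},
\]
and under the descended pairing $\bar M^0 \times \bar M'_0 \to \mathbf{F}_p$ this last quantity equals $\langle e_1, e_{g_p-1}' \rangle$, computed via any lift to $\bar M \times \bar M'$. By the $(i+j)$-invariance of $\langle e_i, e_j' \rangle$ recalled at the start of \S\ref{section_abstract}, this equals $\langle e_0, e_{g_p}' \rangle$, as required. The main obstacle is bookkeeping: keeping track of the Hecke and coefficient copies of $\mathcal{O}$, and of how the higher Eisenstein systems on $\bar M, \bar M'$ relate to those on $\bar M^0, \bar M'_0$; the computational heart is the identity $Q(X)^2 = P'(\pi) Q(X)$ together with the anti-triangular Hankel argument.
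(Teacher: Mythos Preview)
Your proof is correct and takes a genuinely different route from the paper's.

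The paper works over the fraction field $L$: it introduces the Galois conjugates $\sigma(e_F)$, proves the partial-fraction trace lemma $\Tr_{L/\Q_p}(\pi^i/P'(\pi)) = \delta_{i,g_p-1}$, deduces the decomposition $m = \sum_\sigma \sigma(1/P'(\pi))\,\sigma(e_F)$, and then for (ii) compares determinants of the two bases $(\eta^i m)$ and $(\sigma(e_F))$ of $M_L$. For (iii) it again uses the trace expansion to evaluate $\Tr_{L/\Q_p}(\langle e_F,e_F'\rangle/P'(\pi))$ as $\langle m, \Tr(e_F')\rangle$.

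You stay inside $A=\mathcal{O}[X]/P(X)$ and never pass to $L$ or to Galois conjugates. Your engine is the single identity $Q(X)^2=P'(\pi)\,Q(X)$ in $A$ (immediate from $P=(X-\pi)Q$ and $P'(\pi)=Q(\pi)$), together with the Hankel--Gram argument showing $\bar\ell(X^{g_p-1})\in\mathbf{F}_p^\times$. This is shorter and more elementary: no field extension, no trace lemma, no change-of-basis determinant. One small point to tighten: statement (ii) only makes sense once $e_F,e_F'$ are taken primitive (i.e.\ $c,c'\in\mathcal{O}^\times$); you use this implicitly, as does the paper. Also, your computation of $\bar\ell(X^{g_p-1})$ naturally lands on $\langle e_{g_p},e_0'\rangle$ rather than $\langle e_1,e_{g_p-1}'\rangle$; of course both equal $\langle e_0,e_{g_p}'\rangle$ by the $(i+j)$-invariance.

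What the paper's approach buys that yours does not is the formula $m=\Tr_{L/\Q_p}(e_F/P'(\pi))$ (their equation for $\lambda_F(m)=1/P'(\pi)$), which is invoked again in the proof of Theorem~\ref{abstract_main_thm}. Your method proves the proposition cleanly but does not produce this coefficient formula as a by-product; if you were to continue to Theorem~\ref{abstract_main_thm} along the paper's lines you would need to supply it separately (it is, of course, just the same partial-fraction identity read in your model).
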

\begin{proof}
Proof of (\ref{abstract_eigenvector_i}). Suppose such an element $e_F$ exists. One can write uniquely $e_F = \sum_{i=0}^{g_p-1} m_i \otimes \pi^i$ for some $m_i \in M_{\mathfrak{P}}^0$. The condition $\eta \cdot e_F = \pi \cdot e_F$ together with the equation $\pi^{g_p} + \sum_{i=0}^{g_p-1} a_i \cdot \pi^i = 0$ implies that for all $i \in \{0, 1, ..., g_p-1\}$ we have $m_i =  \left( \sum_{k=i+1}^{g_p} a_k\cdot \eta^{k-i-1}\right) \cdot m_{g_p-1}$ (where by convention $a_{g_p}=1$). Thus, $e_F$ is unique up to a scalar in $\mathcal{O}$. Conversely, the formula
\begin{equation}\label{abstract_formula_e_F}
e_F = \sum_{i=0}^{g_p-1} \left( \sum_{k=i+1}^{g_p} a_k\cdot \eta^{k-i-1}\right)\cdot m \otimes \pi^i
\end{equation}

 defines the desired element $e_F$ (for any choice of $m$ in $M_{\mathfrak{P}}^0$).
\\
Proof of (\ref{abstract_eigenvector_ii}). For simplicity, let us assume $L/\Q_p$ is Galois and let $G=\Gal(L/\Q_p)$. The general case is identical, up to replacing Galois automorphisms by embeddings $\sigma : L \hookrightarrow \overline{\Q}_p$.

We shall need the following result:

\begin{lem}\label{abstract_lemma_trace}
We have:
$$
\Tr_{L/\Q_p}\left(\frac{\pi^i}{P'(\pi)}\right)= \left\{
    \begin{array}{ll}
        0 & \mbox{if } 0 \leq i < g_p-1 \\
        1 & \mbox{ if } i=g_p-1
    \end{array}
\right.
$$
\end{lem}
\begin{proof}
For $0 \leq i \leq g_p-1$, we have the following identity:

$$\frac{X^i}{P(X)} = \sum_{\sigma \in G} \sigma\left(\frac{\pi^i}{P'(\pi)}\right)\cdot \frac{1}{X-\sigma(\pi)} \text{ .}$$
The lemma then follows from comparing the series expansion in $1/X$.
\end{proof}

Let $m \in M_{\mathfrak{P}}^0$ (resp. $m' \in (M_0')_{\mathfrak{P}}$) be a generator over $\T_{\mathfrak{P}}^0$. Choose $e_F$ and $e_F'$ as in (\ref{abstract_formula_e_F}). 
By Lemma \ref{abstract_lemma_trace}, we get $\Tr_{L/\Q_p}\left(\frac{e_F}{P'(\pi)}\right) = m$, \ie
\begin{equation}\label{abstract_coeff_lambda}
m = \sum_{\sigma \in G} \sigma\left(\frac{1}{P'(\pi)}\right) \cdot \sigma(e_F) \text{ .}
\end{equation}

The family $(\sigma(e_F))_{\sigma \in G}$ is a basis of $M_L := M_{\mathcal{O}} \otimes_{\mathcal{O}} L$ (where $L = \text{Frac}(\mathcal{O})$), and similarly for $(\sigma(e_F'))_{\sigma \in G}$. Since $(1, \eta, ..., \eta^{g_p-1})$ is a $\Z_p$-basis of $\T_{\mathfrak{P}}^0$, we see that $(m, \eta\cdot m, ..., \eta^{g_p-1}\cdot m)$ is a $\mathcal{O}$-basis of $M_{\mathcal{O}}$. The matrix of $(m, \eta\cdot m, ..., \eta^{g_p-1}\cdot m)$ in the $L$-basis $(\sigma(e_F))_{\sigma \in G}$ of $M_L$ is $P = (\sigma(\frac{\pi^i}{P'(\pi)}))_{\sigma \in G,  0 \leq i \leq g_p-1}$. Note that $\det(P)^2= \prod_{\sigma \in G} \sigma(\frac{1}{P'(\pi)})$. We have an equality of matrices
$$ (\langle \eta^i \cdot m, \eta^j \cdot m' \rangle)_{0\leq i,j \leq g_p-1} = P^\intercal \cdot (\langle \sigma(e_F), \tau(e_F') \rangle)_{\sigma, \tau \in G} \cdot P\text{ .}$$
Since the pairing $\langle \cdot , \cdot \rangle : M_{\mathcal{O}} \times M_{\mathcal{O}}' \rightarrow \mathcal{O}$ is perfect, the determinant of the left hand side is a unit. We thus get $$v_{\pi}\left(\prod_{\sigma \in G} \sigma\left( \frac{\langle e_F, e_F' \rangle}{P'(\pi)}\right)\right)=0,$$
which proves that $v_{\pi}(\langle e_F, e_F' \rangle) = v_{\pi}(P'(\pi))$. Let us now compute, $\frac{\langle e_F, e_F' \rangle}{P'(\pi)}$ modulo $\pi$.
\\
Proof of (\ref{abstract_eigenvector_iii}).
Regarding the first assertion, since for all $0 \leq k<g_p$ we have $a_k \equiv 0 \text{ (modulo } p \text{)}$, it suffices to choose $m$ and $m'$ such that $m \equiv e_{g_p} \text{ (modulo }p \text{)}$ and $m' \equiv e_{g_p-1}' \text{ (modulo }p \text{)}$ respectively. The index discrepancy in the higher Eisenstein elements between $m$ and $m'$ comes from the fact that $M_{\mathcal{O}}$ is the maximal cuspidal \emph{quotient}, while $M_{\mathcal{O}}'$ is the maximal cuspidal \emph{subspace}. Thus, in some sense, $e_1$ is the \emph{Eisenstein element} of $M_{\mathcal{O}}$ (and not a higher Eisenstein element).

We have:
\begin{align*}
\Tr_{L/\Q_p}\left(\frac{\langle e_F, e_F'\rangle}{P'(\pi)}\right) &= \sum_{\sigma \in G} \sigma\left( \langle \frac{e_F}{P'(\pi)}, e_F'\rangle \right) 
\\& = \sum_{\sigma \in G} \langle \sigma(\frac{e_F}{P'(\pi)}), \sigma(e_F')\rangle 
\\& = \sum_{\sigma, \tau \in G}  \langle \sigma(\frac{e_F}{P'(\pi)}), \tau(e_F')\rangle 
\\& = \langle \Tr_{L/\Q_p}(\frac{e_F}{P'(\pi)}),  \Tr_{L/\Q_p}(e_F') \rangle \text{ ,}
\end{align*}

where the third equality comes from the fact that $\langle \sigma(e_F), \tau(e_F')\rangle=0$ if $\sigma \neq \tau$. By (\ref{abstract_coeff_lambda}), we have $\Tr_{L/\Q_p}(\frac{e_F}{P'(\pi)})=m\equiv e_{g_p} \text{ (modulo }p \text{)}$. On the other hand, we have $$\Tr_{L/\Q_p}(e_F')  \equiv \left(\sum_{k=1}^{g_p} a_k\cdot \eta^{k-1}\right)\cdot m' \equiv e_0' \text{ (modulo }p \text{).}$$
This concludes the proof of (\ref{abstract_eigenvector_iii}).

\end{proof}

We can finally prove the main result of this section.

\begin{thm}\label{abstract_main_thm}
Let $f \in S_2(\Gamma_0(N), \Z_p)_{\mathfrak{P}}$ and write $f \equiv \sum_{i=0}^{g_p-1} \lambda_i(f) \cdot f_i \text{ (modulo }p \text{)}$ for some $\lambda_i(f) \in \Z/p\Z$. Let $\lambda_F(f)  \in L$ be such that $f = \Tr_{L/\Q_p}(\lambda_F(f) \cdot F)$, \ie $\lambda_F(f)$ is the coefficient of $f$ in $F$. We have $\langle e_F, e_F' \rangle \cdot \lambda_F(f) \in \mathcal{O}$ and 
$$\langle e_F, e_F' \rangle \cdot \lambda_F(f) \equiv \langle e_0, e_{g_p}' \rangle \cdot \lambda_{g_p-1}(f) \text{ (modulo } \pi \text{).}$$
\end{thm}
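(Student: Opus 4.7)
The plan is to pull $f$ back through the isomorphism $\Theta^0_{\mathfrak{P}}$ from Section 2.1 and compare two descriptions of the resulting tensor $\xi_f := (\Theta^0_{\mathfrak{P}})^{-1}(f) \in M^0_{\mathfrak{P}} \otimes_{\T^0_{\mathfrak{P}}} (M')_{0,\mathfrak{P}}$: integrally, as a $\T^0_{\mathfrak{P}}$-multiple of the generator $m \otimes m'$; and over $L$, as a Galois trace of a multiple of the eigentensor $e_F \otimes e_F'$. Matching the two simultaneously will yield the integrality of $\langle e_F, e_F'\rangle \lambda_F(f)$ and its value modulo $\pi$.

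First, the identity $\Theta(e_F \otimes e_F') = \langle e_F, e_F'\rangle \cdot F$ is immediate from the Fourier expansion definition of $\Theta$ combined with the eigenproperty $T_n \cdot e_F' = a_n(F)\, e_F'$. Inverting $\Theta$ on $f = \Tr_{L/\Q_p}(\lambda_F(f) F)$ after extending scalars to $L$ then forces
\[
\xi_f \;=\; \Tr_{L/\Q_p}\!\left(\frac{\lambda_F(f)}{\langle e_F, e_F'\rangle}\, e_F \otimes e_F'\right)
\]
as an identity in $M_L \otimes_{\T^0_L} M'_L$, where the cross $\sigma \neq \tau$ components vanish since they lie in different eigenspaces of $\T^0_L$.

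Next, applying (\ref{abstract_coeff_lambda}) to both factors yields $m \otimes m' = \Tr_{L/\Q_p}\!\big((e_F \otimes e_F')/P'(\pi)^2\big)$. Writing $\xi_f = t' \cdot (m \otimes m')$ for the unique $t' \in \T^0_{\mathfrak{P}}$ and setting $u := \langle e_F, e_F'\rangle/P'(\pi) \in \mathcal{O}^{\times}$ (a unit by Proposition \ref{abstract_eigenvector}(ii)), matching the two expressions for $\xi_f$ forces $\varphi(t') = \langle e_F, e_F'\rangle \lambda_F(f)/u^2$. Since $\varphi(t') \in \mathcal{O}$ and $u$ is a unit, this already proves the integrality assertion $\langle e_F, e_F'\rangle \lambda_F(f) \in \mathcal{O}$.

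For the congruence, reduce $\xi_f$ modulo $p$ using the choices $m \equiv e_{g_p}$ and $m' \equiv e_{g_p-1}'$ from Proposition \ref{abstract_eigenvector}(iii). Expanding $\varphi(t') = \sum_{i=0}^{g_p-1} c_i \pi^i$ (so $t' \equiv \sum_i c_i \eta^i$) and using $\eta^i e_{g_p} = e_{g_p-i}$ gives $\xi_f \equiv \sum_i c_i\, e_{g_p-i} \otimes e_{g_p-1}' \pmod p$. Proposition \ref{abstract_theta_higher_eis} applied to each summand shows that only the $i=0$ term contributes to the coefficient of $f_{g_p-1}$, so $\lambda_{g_p-1}(f) \equiv c_0\, \langle e_0, e_{g_p}'\rangle \pmod p$. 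Finally, $c_0 \equiv \varphi(t') \equiv \overline{v}\,\overline{u}^{-2} \pmod \pi$ with $v := \langle e_F, e_F'\rangle \lambda_F(f)$, and $\overline{u} \equiv \langle e_0, e_{g_p}'\rangle \pmod \pi$ by Proposition \ref{abstract_eigenvector}(iii), giving $v \equiv \lambda_{g_p-1}(f)\, \langle e_0, e_{g_p}'\rangle \pmod \pi$ as required. The one subtle step is the trace identity $m \otimes m' = \Tr_{L/\Q_p}((e_F \otimes e_F')/P'(\pi)^2)$, which encodes the different ideal of $\mathcal{O}/\Z_p$ and bridges the integral and $L$-eigenspace descriptions of $\xi_f$.
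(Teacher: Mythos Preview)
Your proof is correct and uses the same key ingredients as the paper's: the identity $\Theta(e_F\otimes e_F')=\langle e_F,e_F'\rangle\cdot F$, the trace formula $m=\Tr_{L/\Q_p}(e_F/P'(\pi))$ from (\ref{abstract_coeff_lambda}), Proposition~\ref{abstract_eigenvector}(iii), and Proposition~\ref{abstract_theta_higher_eis}. The only organizational difference is that you fix the generators $m,m'$ once and for all and encode the variation of $f$ in a single scalar $t'\in\T^0_{\mathfrak{P}}$, whereas the paper lets $m,m'$ vary and factors the statement into two separate claims $\langle e_F,e_F'\rangle\,\lambda_F(m)\equiv\langle e_0,e_{g_p}'\rangle\,\lambda_{g_p}(m)$ and $\langle e_F,e_F'\rangle\,\lambda_F(m')\equiv\langle e_0,e_{g_p}'\rangle\,\lambda_{g_p-1}(m')$, checked on the basis $m,\eta m,\dots,\eta^{g_p-1}m$; your packaging via $t'$ is a bit more streamlined but the content is identical.
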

\begin{proof}
It suffices to check to check this statement for $f = \Theta_{\mathfrak{P}}^0(m \otimes m')$ where $m \in M_{\mathcal{O}}$ and $m' \in M_{\mathcal{O}}'$. Write $m \equiv \sum_{i=1}^{g_p} \lambda_i(m) \cdot e_i \text{ (modulo }p \text{)}$ and $m' \equiv \sum_{i=0}^{g_p-1} \lambda_i(m') \cdot e_i \text{ (modulo }p \text{)}$. By Proposition \ref{abstract_theta_higher_eis}, we have:
$$\Theta_{\mathfrak{P}}^0(m \otimes m') = \langle e_0, e_{g_p}' \rangle \cdot \sum_{1 \leq i \leq g_p \atop 0\leq j \leq g_p-1} \lambda_i(m)\cdot \lambda_j(m')\cdot f_{i+j-g_p} \text{ .}$$
We thus get 
\begin{equation}\label{abstract_proof_main_thm_1}
\lambda_{g_p-1}(f) \equiv \langle e_0, e_{g_p}' \rangle \cdot  \lambda_{g_p}(m)\cdot \lambda_{g_p-1}(m') \text{ (modulo } p \text{).}
\end{equation}

On the other hand, write $m = \Tr_{L/\Q_p}(\lambda_F(m) \cdot e_F)$ and $m' = \Tr_{L/\Q_p}(\lambda_F(m') \cdot e_F')$. By orthogonality of the various conjugates of $e_F$ and $e_F'$, we get: 
$$\Theta_{\mathfrak{P}}^0(m \otimes m') = \Theta_{\mathfrak{P}}^0\left(\Tr_{L/\Q_p}( \lambda_F(m)\cdot \lambda_F(m')\cdot e_F \otimes e_F')\right) \text{ .}$$
We claim that $\Theta_{\mathfrak{P}}^0(e_F \otimes e_F') = \langle e_F, e_F' \rangle \cdot F$. Both sides are proportional since they have the same eigenvalues for the Hecke operators, and the constant of proportionality is given by the first Fourier coefficient of $\Theta_{\mathfrak{P}}^0(e_F \otimes e_F')$, which is $ \langle e_F, e_F' \rangle$ by definition. We thus get:
$$\Theta_{\mathfrak{P}}^0(m \otimes m') = \Tr_{L/\Q_p}\left(\lambda_F(m)\cdot \lambda_F(m')\cdot \langle e_F, e_F' \rangle \cdot F\right) \text{ .}$$
This proves that $\lambda_F(f) = \lambda_F(m)\cdot \lambda_F(m')\cdot \langle e_F, e_F' \rangle$, and so 
\begin{equation}\label{abstract_proof_main_thm_2}
\langle e_F, e_F' \rangle \cdot \lambda_F(f) =  (\langle e_F, e_F' \rangle \cdot \lambda_F(m))\cdot  (\langle e_F, e_F' \rangle \cdot\lambda_F(m')) \text{ .}
\end{equation}
By (\ref{abstract_proof_main_thm_1}) and (\ref{abstract_proof_main_thm_2}), in order to prove Theorem \ref{abstract_main_thm}, it is enough to show that $\langle e_F, e_F' \rangle \cdot \lambda_F(m) \in \mathcal{O}$ and $\langle e_F, e_F' \rangle \cdot \lambda_F(m) \equiv  \langle e_0, e_{g_p}' \rangle \cdot \lambda_{g_p}(m) \text{ (modulo } \pi \text{)}$, and similarly $\langle e_F, e_F' \rangle \cdot \lambda_F(m') \in \mathcal{O}$ and $\langle e_F, e_F' \rangle \cdot \lambda_F(m') \equiv \langle e_0, e_{g_p}' \rangle \cdot \lambda_{g_p-1}(m') \text{ (modulo } \pi \text{)}$. The statement is true for $m \equiv e_{g_p} \text{ (modulo } p \text{)}$: this follows from (\ref{abstract_coeff_lambda}) (from which one gets $\lambda_F(m) = \frac{1}{P'(\pi)}$) and Proposition \ref{abstract_eigenvector} (\ref{abstract_eigenvector_iii}). The statement is then seen to be true for $\eta^i\cdot m$ for any $i \in \{1, ..., g_p-1\}$  (both sides of the equality $\langle e_F, e_F' \rangle \cdot \lambda_F(m) \equiv  \langle e_0, e_{g_p}' \rangle \cdot \lambda_{g_p}(m) \text{ (modulo } \pi \text{)}$ are zero), and thus for any linear combination of such terms.
\end{proof}

\subsection{Higher Eisenstein elements}
In the following paragraphs, we review some results about higher Eisenstein elements that we shall need later. These results are nicely summarised in \cite{DHRV}, so we briefly restate them and generalize them when needed.

\subsubsection{The supersingular module}\label{subsection_supersingular}

The supersingular module is $M = \Z[S]$ where $S$ is the set of isomorphism classes of supersingular elliptic curves over $\overline{\mathbf{F}_{N}}$. It is well-known that elements of $S$ are defined over $\mathbf{F}_{N^2}$. For $E \in S$, we let $w_E = \frac{\Aut(E)}{2} \in \{1,2,3\}$. There is a natural action of $\mathbb{T}$ on $M$. The Atkin--Lehner involution $W_N$ also acts on $M$ (we actually have $W_N=-U_N \in \mathbb{T}$). Explicitely, we have $W_N([E]) = -[E^{(N)}]$ for $[E] \in S$, where $E^{(N)}$ is the elliptic curve obtained from $E$ by raising its coefficients to the power $N$.

There is a natural $\mathbb{T}$-equivariant bilinear pairing $\langle \cdot, \cdot \rangle : M \times M \rightarrow \Z$ defined by 
$$
\langle [E] , [E'] \rangle= \left\{
    \begin{array}{ll}
        0 &  \mbox{ if } E \neq E' \\
        w_E & \mbox{ if } E=E'
    \end{array}
\right.
$$
which is perfect after inverting $6$.

The Eisenstein element $\Sigma_0$ of $M$ is normalised such that
\begin{equation}\label{abstract_equation_sigma0}
\Sigma_0 := \sum_{E \in S} \frac{[E]}{w_E} \text{ .}
\end{equation}
We fix a system of higher Eisenstein elements $\Sigma_0$, $\Sigma_1$, ..., $\Sigma_{g_p}$ of $M/pM$. There is an explicit formula for $\Sigma_1$ (\cf \cite[Equation (91)]{DHRV}),

Recall the notation $g$, $R$, $\psi_1$, $\psi_1'$, $\psi$, $U_g$ and $\red_N$ from the introduction.  We assume in this section that $K$ is imaginary quadratic. Let $\mathcal{H}$ be the Hilbert class field of $K$ and $F$ be a finite abelian extension of $K$ containing $\mathcal{H}$ and such that $\rho_g$ factors through $\Gal(F/\Q)$. We let $H = \Gal(F/K)$, which is an abelian group. Note that $\Gal(K/\Q)$ acts on $H$, but that the action is not in general the map $h \mapsto h^{-1}$ (this is the case if $\psi_1$ is unramified, which is assumed in \cite{DHRV}).
Thus, $\psi_1$, $\psi_1'$ and $\psi$ can be considered as character $H \rightarrow \C^{\times}$. 

As in \cite[\S 5.1]{DHRV}, for any character $\chi : \Pic(\mathcal{O}_K) \rightarrow \Z[\chi]^{\times} $ of the class group of $K$ (\eg $\chi=\psi$ or $\chi = \mathbb{1}$), one can associate an element $[\chi] \in M \otimes \Z[\chi]$ (where $\Z[\chi]$ is the ring generated by the values of $\chi$). The element $[\chi]$ actually depends on a choice of a prime ideal $\mathfrak{N}$ of $F$ above $N$, so in order to keep track of this choice, we shall use the notation $[\chi]_{\mathfrak{N}}$.

The following result will be crucial for us. It is a direct generalization of \cite[Proposition 5.2 and Lemma 5.6]{DHRV}. Actually, the element $u_g$ of \cite[Lemma 5.6]{DHRV} depends on the choice of an ideal $\mathfrak{N}$ above $N$. So does their right hand side, so that the equality written there is correct, but not precise enough for our purposes.

\begin{thm}\label{abstract_unit_supersingular}
There exists $u_g \in U_g$ and $D_g \in \Z$ such that for all $N$ and $p$ as before, we have in $R/pR$:
$$ \abs*{D_g \cdot \langle \Sigma_1, [\psi]_{\mathfrak{N}} \rangle}^2 =\abs{\log(\red_N(u_g))}^2 $$
and
$$\overline{\log(\red_N(u_g))} = -\chi_g(N)^{-1}\cdot \log(\red_N(u_g))$$
where if $x\in R/pR$, we have denoted by $\overline{x}$ the complex conjugate of $x$ and $\abs*{x}^2 = x\overline{x}$.
\end{thm}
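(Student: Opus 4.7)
The strategy is to build on \cite[Proposition 5.2 and Lemma 5.6]{DHRV}, which together give a version of the first identity but with a unit that \emph{a priori} depends on the choice of prime $\mathfrak{N}$ of $F$ above $N$. The two new things to carry out are: (a) absorb this $\mathfrak{N}$-dependence by passing to absolute value squared, so as to produce a genuinely global $u_g \in U_g$ intrinsic to $g$; and (b) establish the Galois conjugation identity, which is the precise analogue in our setting of a sign computation implicit in \cite{DHRV}.

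First I would unfold $\langle \Sigma_1, [\psi]_{\mathfrak{N}}\rangle$ using the explicit description of $\Sigma_1$ from \cite[Eq.~(91)]{DHRV} and the definition $[\psi]_{\mathfrak{N}} = \sum_{\sigma \in \Pic(\mathcal{O}_K)} \psi(\sigma)\cdot [E_\sigma \bmod \mathfrak{N}]$, as in \cite[Prop.~5.2]{DHRV}; then \cite[Lemma 5.6]{DHRV} identifies the resulting sum of logarithms of reductions of CM units with $\log(\red_N(u_g^{(\mathfrak{N})}))$ for an element $u_g^{(\mathfrak{N})}$ in the $\psi$-isotypic part of $U_g\otimes R$ that depends on $\mathfrak{N}$. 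Two choices of $\mathfrak{N}$ above $N$ differ by the action of some $\tau \in H = \Gal(F/K)$, and the corresponding $u_g^{(\mathfrak{N}')}$ and $u_g^{(\mathfrak{N})}$ differ by the scalar $\psi(\tau)$, a root of unity. Hence $\abs*{\log(\red_N(u_g^{(\mathfrak{N})}))}^2$ is independent of $\mathfrak{N}$, and setting $u_g := u_g^{(\mathfrak{N}_0)}$ for a single fixed initial choice yields the first equality, with $D_g\in\Z$ encoding the normalisation constants and class-number prefactors from \cite[Prop.~5.2]{DHRV}.

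For the Galois conjugation identity, the key input is the structure of the adjoint representation $\mathrm{ad}(\rho_g) = \mathbb{1}\oplus \chi_K\oplus \mathrm{Ind}_K^{\Q}(\psi)$ cutting out $U_g$. The unit $u_g$ lies in the $\mathrm{Ind}_K^{\Q}(\psi)$-piece, which restricted to $G_K$ splits as $\psi\oplus\psi^{-1}$ with the nontrivial $c\in\Gal(K/\Q)$ swapping the two summands. Under our fixed embeddings $\overline{\Q}\hookrightarrow\overline{\C}_p$ and by the naturality of $\red_N$, complex conjugation on $\log(\red_N(u_g))$ corresponds to applying $c$ followed by $\Frob_N$. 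Since $g$ is odd and $\psi^c = \psi^{-1}$, one computes, using the compatibility $\psi_1\psi_1' = \chi_g$ on $G_K$ (valid under the unramifiedness hypothesis on $\psi$), that this combined action scales the $\psi$-eigenline by $-\chi_g(N)^{-1}$, which is exactly the asserted relation.

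The main obstacle will be this final sign and character bookkeeping: pinning down the constant as $-\chi_g(N)^{-1}$ rather than some other $\pm\chi_g(N)^{\pm 1}$, and controlling how the duality between the $\psi$- and $\psi^{-1}$-eigenlines in $U_g\otimes R$ interacts with $\Frob_N$ and with the normalisation of $\red_N$. A secondary concern is that \cite{DHRV} assumes that $\psi_1$ itself is unramified, whereas we only need $\psi = \psi_1/\psi_1'$ to be unramified; one must check that the constructions of $[\psi]_{\mathfrak{N}}$ and of $u_g^{(\mathfrak{N})}\in U_g$ go through in this slightly greater generality, which should be essentially formal since only the ratio $\psi$ appears in the relevant pairings.
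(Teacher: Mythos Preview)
Your overall strategy matches the paper's, and you correctly identify both the $\mathfrak{N}$-dependence issue and the need for the conjugation identity. But there is a real gap in how you resolve the first, and the paper takes a more concrete route for the second.

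For the construction of $u_g$: your argument only shows that for a \emph{fixed} $N$, the quantity $\abs*{\log(\red_N(u_g^{(\mathfrak{N})}))}^2$ does not depend on the choice of $\mathfrak{N}$ above that $N$. The theorem, however, requires a single $u_g \in U_g$, independent of $N$, for which the identity holds for \emph{all} $N$ and $p$. Writing ``set $u_g := u_g^{(\mathfrak{N}_0)}$ for a fixed initial choice'' does not close this gap: $\mathfrak{N}_0$ lies above a specific prime $N_0$, and you have given no reason why this particular element satisfies the identity at other primes $N$. The paper avoids this by constructing $u_g$ explicitly and intrinsically. Choosing a basis $(e_1,e_2)$ of $V_g$ on which $G_K$ acts via $\mathrm{diag}(\psi_1,\psi_1')$ and the complex conjugation $\sigma$ acts by the swap matrix, one sets
\[
u_g \;=\; u_{\psi,\mathfrak{q}}\otimes\begin{pmatrix}0&1\\0&0\end{pmatrix} \;+\; \sigma(u_{\psi,\mathfrak{q}})\otimes\begin{pmatrix}0&0\\1&0\end{pmatrix},
\]
which visibly depends only on $g$ and the auxiliary prime $\mathfrak{q}$, not on $N$. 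A direct matrix check shows this is $\Gal(\overline{\Q}/\Q)$-fixed (hence lies in $U_g$), and one then computes $\log(\red_N(u_g)) = 2\psi_1(h)\cdot\log_{\mathfrak{N}}(u_{\psi,\mathfrak{q}})$ where $\sigma_{\mathfrak{N}}=\sigma h$; the factor $\psi_1(h)$ is exactly what absorbs the $\mathfrak{N}$-dependence on the right. This explicit description of $\rho_g$ in the basis $(e_1,e_2)$ is also what makes the passage from ``$\psi_1$ unramified'' to ``$\psi$ unramified'' routine.

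For the conjugation identity: rather than arguing through the abstract structure of the adjoint and your heuristic ``complex conjugation corresponds to $c$ followed by $\Frob_N$'' (which conflates conjugation on the coefficient ring $R$ with a Galois action on the units), the paper works directly on the supersingular side. It uses the relation $\overline{[\psi]_{\mathfrak{N}}} = -\psi(h)\cdot W_N[\psi]_{\mathfrak{N}}$, the fact that $-W_N\Sigma_1 \equiv \Sigma_1$ modulo $p$ (since $W_N=-U_N$ and $U_N$ is Eisenstein), and the determinant computation $\chi_g(N)=\det\rho_g(\sigma h)=-\psi_1(h)\psi_1'(h)$. Chaining these gives $\overline{\log(\red_N(u_g))} = \psi_1(h)^{-1}\psi_1'(h)^{-1}\log(\red_N(u_g)) = -\chi_g(N)^{-1}\log(\red_N(u_g))$ with no residual sign ambiguity. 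Your abstract approach might ultimately be made to work, but as you yourself anticipate the bookkeeping is delicate, and the paper's computation with $W_N$ on $[\psi]_{\mathfrak{N}}$ is precisely what pins down the sign.
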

\begin{proof}
Let $\mathfrak{N}$ be a prime ideal of $F$ above $N$.
Following \cite[\S 5.1]{DHRV}, one chooses an auxiliary prime $q$ not dividing $D$ and such that $q = \mathfrak{q} \overline{\mathfrak{q}}$ splits in $K$ and $\psi(\mathfrak{q})\neq 1$. As in \cite[Definition 5.1]{DHRV}, one defines a certain elliptic unit $u_{\psi, \mathfrak{q}} \in \mathcal{H}^{\times} \otimes R$ (where as above $\mathcal{H}$ is the Hilbert class field of $K$). It has the property that $\Gal(\mathcal{H}/K)$ acts by multiplication by $\psi^{-1}$ on $u_{\psi, \mathfrak{q}}$. By \cite[Proposition 5.2]{DHRV}, we have
$$(1-\psi(\overline{\mathfrak{q}}))\times \langle \Sigma_1, [\psi]_{\mathfrak{N}} \rangle = -\frac{1}{6}\cdot \log_{\mathfrak{N}}(u_{\psi, \mathfrak{q}}) \text{ ,} $$
where $\log_{\mathfrak{N}}(u_{\psi, \mathfrak{q}}) $ means $\log$ of the reduction of $u_{\psi, \mathfrak{q}}$ modulo $\mathfrak{N}$ (which belongs to $\mathbf{F}_{N^2}^{\times}$ so we extend $\log$ uniquely to $\mathbf{F}_{N^2}^{\times})$. If $\mathfrak{N}' = h(\mathfrak{N})$ is another prime ideal above $N$ (for some $h \in H$), then 

\begin{equation}\label{equation_change_N_log}
\log_{\mathfrak{N}}(u_{\psi, \mathfrak{q}}) = \log_{\mathfrak{N}'}(h(u_{\psi, \mathfrak{q}})) = \psi^{-1}(h)\cdot  \log_{\mathfrak{N}'}(u_{\psi, \mathfrak{q}}) \text{ .}
\end{equation}

Our fixed embedding $\overline{\Q} \hookrightarrow \C$ determines a complex conjugation $\sigma \in \Gal(F/\Q)$. Let $\sigma_{\mathfrak{N}} \in \Gal(F/\Q)$ the Frobenius element corresponding to $\mathfrak{N}$. One modifies slightly the construction of the proof of \cite[Lemma 5.6]{DHRV} as follows. Let $V_g$ be the two dimensional $R$-lattice on which the Artin representation $\rho_g$ acts. We choose a vector $e_1$ of $V_g$ on which $H$ acts via $\psi_1$. Let $e_2 = \sigma(e_1)$. By construction, $(e_1,e_2)$ is a $R$-basis of $V_g$. 

 One can completely describe $\rho_g$ in the basis $(e_1, e_2)$ as follows. If $h\in \Gal(\overline{K}/K)$, then we have
$$\rho_g(h) = \begin{pmatrix} \psi_1(h) & 0 \\ 0 & \psi_1'(h) \end{pmatrix} \text{ ,}$$
where $\psi_1'(h) = \psi_1(\sigma^{-1} h \sigma)$. Furthermore, we have
$$\rho_g(\sigma) = \begin{pmatrix} 0 & 1 \\ 1 & 0 \end{pmatrix} \text{ ,}$$
since $\det(\rho_g(\sigma))=-1$.

Let 
$$u_{g} := u_{\psi, \mathfrak{q}} \otimes \begin{pmatrix} 0 & 1 \\ 0 & 0 \end{pmatrix} + \sigma(u_{\psi, \mathfrak{q}}) \otimes \begin{pmatrix} 0 & 0 \\ 1 & 0 \end{pmatrix} \text{ .}$$
Here, the matrices in the tensor products live in $\text{Ad}(V_g)$, which we identify with $\text{Ad}(V_g)^*$ via the trace pairing. Obviously, $u_g$ is independent of $N$ and $p$.

We claim that $u_g$ is fixed by $\Gal(\overline{\Q}/\Q)$.
If $h \in \Gal(\overline{K}/K)$, then 
\begin{align*}
h(u_g) &= \psi^{-1}(h)\cdot u_{\psi, \mathfrak{q}}\otimes \begin{pmatrix} \psi_1(h) & 0 \\ 0 & \psi_1'(h) \end{pmatrix}  \begin{pmatrix} 0 &1 \\ 0 & 0 \end{pmatrix} \begin{pmatrix} \psi_1(h) & 0 \\ 0 & \psi_1'(h) \end{pmatrix}^{-1} \\& + \psi(h)\cdot \sigma(u_{\psi, \mathfrak{q}})\otimes  \begin{pmatrix} \psi_1(h) & 0 \\ 0 & \psi_1'(h) \end{pmatrix}  \begin{pmatrix} 0 & 0 \\ 1 & 0 \end{pmatrix} \begin{pmatrix} \psi_1(h) & 0 \\ 0 & \psi_1'(h) \end{pmatrix}^{-1} 
\\& = u_g \text{ .}
\end{align*}
We also have:
\begin{align*}
\sigma(u_g) &= \sigma(u_{\psi, \mathfrak{q}})\otimes \begin{pmatrix} 0 & 1 \\ 1 & 0 \end{pmatrix}  \begin{pmatrix} 0 & 1 \\ 0 & 0 \end{pmatrix}  \begin{pmatrix} 0 & 1 \\ 1 & 0 \end{pmatrix}^{-1} \\& +  u_{\psi, \mathfrak{q}} \otimes   \begin{pmatrix} 0 & 1 \\ 1 & 0 \end{pmatrix}  \begin{pmatrix} 0 & 0 \\ 1 & 0 \end{pmatrix}  \begin{pmatrix} 0 & 1 \\ 1 & 0 \end{pmatrix}^{-1} 
\\& = u_g \text{ .}
\end{align*}

We have thus proved that $u_g$ is fixed by $\Gal(\overline{\Q}/\Q)$, and thus by definition belongs to $U_g$. 

Let $h \in H$ such that $\sigma_{\mathfrak{N}} = \sigma \cdot h$. We have 
$$\rho_g(\sigma_{\mathfrak{N}}) = \begin{pmatrix} 0 & 1 \\ 1 & 0 \end{pmatrix} \begin{pmatrix} \psi_1(h) & 0 \\ 0 & \psi_1'(h) \end{pmatrix} = \begin{pmatrix} 0 & \psi_1'(h) \\ \psi_1(h) & 0 \end{pmatrix} \text{ .} $$

 By definition, we have
\begin{align*}
\log(\red_N(u_g)) &= \log_{\mathfrak{N}} \left(\text{Tr}(u_g \cdot \rho_g(\sigma_N)) \right)
\\& =  \log_{\mathfrak{N}}\left( u_{\psi, \mathfrak{q}} \cdot \text{Tr}\left(\begin{pmatrix} 0 & 1 \\ 0 & 0 \end{pmatrix} \cdot \begin{pmatrix} 0 &\psi_1'(h) \\ \psi_1(h) & 0 \end{pmatrix}\right) \right) \\&+  \log_{\mathfrak{N}}\left(\sigma(u_{\psi, \mathfrak{q}}) \cdot  \text{Tr}\left(\begin{pmatrix} 0 & 0 \\ 1 & 0 \end{pmatrix} \cdot \begin{pmatrix} 0 &\psi_1'(h) \\ \psi_1(h) & 0 \end{pmatrix} \right) \right)
\\& =\psi_1(h)\cdot \log_{\mathfrak{N}}(u_{\psi, \mathfrak{q}}) + \psi_1'(h)\cdot \log_{\mathfrak{N}}(\sigma( u_{\psi, \mathfrak{q}})) \text{ .}
\end{align*}
Note that 
$$\log_{\mathfrak{N}}(\sigma(u_{\psi, \mathfrak{q}})) = \log_{\mathfrak{N}}(\sigma_{\mathfrak{N}}h^{-1}(u_{\psi, \mathfrak{q}})) = \psi(h)\cdot   \log_{\mathfrak{N}}(\sigma_{\mathfrak{N}}(u_{\psi, \mathfrak{q}})) = \psi(h)\cdot   \log_{\mathfrak{N}}(u_{\psi, \mathfrak{q}}) \text{ .}$$

We thus have 

\begin{equation}\label{equation_relate_u_psi_g}
\log(\red_N(u_g)) = 2\cdot \psi_1(h) \cdot \log_{\mathfrak{N}}(u_{\psi, \mathfrak{q}})= -12 \cdot \psi_1(h) \cdot (1-\psi(\overline{\mathfrak{q}})) \cdot \langle \Sigma_1, [\psi]_{\mathfrak{N}} \rangle\text{ .}
\end{equation}

One then let $u_g' =  (1-\psi(\mathfrak{q}))  \cdot u_g \in U_g$ and $D_g = -12\cdot \abs*{1-\psi(\mathfrak{q})}^2 \in \Z$, so that 
$$
\log(\red_N(u_g')) = D_g\cdot \psi_1(h) \cdot  \langle \Sigma_1, [\psi]_{\mathfrak{N}} \rangle \text{ .}
$$
We thus get $\abs*{\log(\red_N(u_g'))}^2 = \abs*{D_g\cdot  \langle \Sigma_1, [\psi]_{\mathfrak{N}} \rangle}^2$.

One easily checks from the definition that for any character $\chi$ of the class group of $K$, we have:
\begin{equation}\label{equation_complex_conjug_symbol_chi}
\overline{[\chi]_{\mathfrak{N}} }= -\chi(h) \cdot W_N [\chi]_{\mathfrak{N}} \text{ ,}
\end{equation}
where $h \in H$ is such that $\sigma_{\mathfrak{N}} = \sigma \cdot h$. Thus, we have in $R/pR$:
\begin{align*}
\overline{\log(\red_N(u_g'))} &=  D_g\cdot \psi_1(h)^{-1}\cdot \langle \Sigma_1, \overline{[\psi]_{\mathfrak{N}}}\rangle
\\& = D_g\cdot \psi_1(h)^{-1}\cdot \psi(h)\cdot  \langle \Sigma_1, -W_N[\psi]_{\mathfrak{N}}\rangle 
\\& = D_g\cdot \psi_1(h)^{-1}\cdot \psi(h)\cdot  \langle \Sigma_1, [\psi]_{\mathfrak{N}}\rangle 
\\& = \psi_1(h)^{-2}\cdot \psi(h)\cdot \log(\red_N(u_g'))
\\& = \psi_1(h)^{-1} \cdot \psi_1'(h)^{-1} \cdot \log(\red_N(u_g')) \text{ .}
\end{align*}
We have used the fact that  $\langle \Sigma_1, -W_N[\psi]_{\mathfrak{N}}\rangle =   \langle -W_N\Sigma_1, [\psi]_{\mathfrak{N}}\rangle = \langle \Sigma_1, [\psi]_{\mathfrak{N}}\rangle $.
Note that we have 
\begin{equation}\label{eq_chi_N_h}
\chi_g(N) = \det(\rho_g(\sigma_{\mathfrak{N}})) =  \det(\rho_g(\sigma \cdot h)) = - \det(\rho_g(h)) = -\psi_1(h)\cdot \psi_1'(h) \text{ .}
\end{equation}
We have thus proved:
$$\overline{\log(\red_N(u_g'))} = - \chi_g(N)^{-1} \cdot \log(\red_N(u_g')) \text{ .}$$
Thus, $u_g'$ satisfies the two conditions of Theorem \ref{abstract_unit_supersingular}.
\end{proof}

\subsubsection{Modular symbols}
We refer to \cite[\S 4.3, 4.4]{DHRV} for the results of this paragraph.
We let $M^+ = H^1(X_0(N), \cusps, \Z_p)^+$ and $M^- = H^1(Y_0(N), \Z_p)^-$. Here, we are using singular cohomology and the sign corresponds to the eigenvalue for the action of the complex conjugation. There is a natural action of $\mathbb{T}$ (and thus $W_N$) on $M^+$ and $M^-$. There is a natural perfect $\mathbb{T}$-equivariant bilinear pairing $\langle \cdot, \cdot \rangle : M^+ \times M^- \rightarrow \Z_p$ given Poincar\'e duality. Note that, since $p>3$, one can ignore elliptic points and we get a canonical isomorphism
$M^{-} \simeq \Hom(\Gamma_0(N), \Z_p)^-$. Furthermore, the cuspidal quotient $(M^+)^0$ is equal to $H^1(X_0(N), \Z_p)^+$, which can be identified with the subspace of group homomorphism $\varphi \in \Hom(\Gamma_0(N), \Z_p)^+$ vanishing on the parabolic elements. We fix a system of higher Eisenstein elements $\kappa_0^{\pm}$, $\kappa_1^{\pm}$, ..., $\kappa_{g_p}^{\pm}$ in $M^{\pm}/pM^{\pm}$ as in \cite[\S 4.3, 4.4]{DHRV}. In particular, we have the following formula:

$$\kappa_1^+ \begin{pmatrix} a& b \\ c&d \end{pmatrix} = \log(a) $$
for any $\begin{pmatrix} a& b \\ c&d \end{pmatrix}  \in \Gamma_0(N)$.
Furthermore, $\kappa_0^-$ is the $N$-stabilization of the Dedekind--Rademacher homomorphism.

\subsubsection{Modular forms}

We let $\mathcal{M} = M_2(\Gamma_0(N), \Z_p)$. Recall that we have fixed a system of higher Eisenstein elements $f_0$, $f_1$, ..., $f_{g_p}$ of $\mathcal{M}/p\mathcal{M}$, where $f_0$ is the weight $2$ Eisenstein series modulo $p$ defined by (\ref{intr_eq_eis_series}). Let us now consider the dual $\mathcal{M}^{\vee} = \Hom(\mathcal{M}, \Z_p)$. We normalise the Eisenstein element $\mathfrak{S}_0$ of $\mathcal{M}^{\vee}$ by the formula
$$\mathfrak{S}_0(f) = a_0(f)\text{ .}$$
We denote by $\mathfrak{S}_0$, $\mathfrak{S}_1$, ..., $\mathfrak{S}_{g_p}$ a system of higher Eisenstein elements of $\overline{\mathcal{M}}^{\vee}$. Note that $\mathfrak{S}_1$ is only well-defined modulo $\mathfrak{S}_0$, but that the image of $\mathfrak{S}_1$ in $\Hom(S_2(\Gamma_0(N), \Z/p\Z), \Z/p\Z)$ is well-defined. By \cite[Theorem 4.9]{DHRV}, this image is simply the Shimura class $\mathfrak{S}$ (viewed in coherent cohomology, \cf \cite[\S 1.1]{DHRV}) of the introduction.

\subsection{Compatibility between pairings}

Since the various pairings involving the Hecke modules above will appear in our computations, it is important to know that they are essentially the same:

\begin{thm}\label{abstract_comparison_pairings}
For all $i,j \in \{0, 1, ..., g_p \}$ with $i+j=g_p$, we have:
$$\langle \Sigma_i, \Sigma_j \rangle =2  \cdot \langle f_i, \mathfrak{S}_j \rangle = -\frac{1}{12} \langle \kappa_i^+, \kappa_j^- \rangle \text{ .}$$

\end{thm}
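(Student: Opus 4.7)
By Corollary 2.5 of \cite{Lecouturier_higher}, in each of the three pairings the value $\langle e_i, e_j' \rangle$ depends only on $i+j$ for $i+j \leq g_p$, so it suffices to establish the two claimed equalities for a single choice of $(i,j)$ with $i+j = g_p$.

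The strategy is to use the three $\Theta$-correspondences of Section \ref{section_abstract} as a common comparator. By Proposition \ref{abstract_theta_higher_eis}, for each of the three pairs $(M,M')$ (supersingular with itself, $\mathcal{M}$ with $\mathcal{M}^\vee$, $M^+$ with $M^-$), the image $\Theta(e_0 \otimes e_{g_p}')$ equals $\langle e_0, e_{g_p}' \rangle \cdot f_0$ in $\mathcal{M}/p\mathcal{M}$. Thus each of the three pairing values is detected as the coefficient on $f_0$ of the corresponding $\Theta$-image, so the theorem reduces to comparing the three classical realizations of $\Theta$.

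These three $\Theta$-maps are all classical up to normalization: for the supersingular module, $\Theta$ is Emerton's realization of weight $2$ modular forms via Brandt matrices (where $\langle [E], T_n [E']\rangle$ counts degree $n$ isogenies $E \to E'$, weighted by automorphisms); for $\mathcal{M} \otimes \mathcal{M}^\vee$, $\Theta$ is essentially tautological because $\mathfrak{S}_0(f) = a_0(f)$ and the higher $\mathfrak{S}_j$ are defined by requiring $\eta \mathfrak{S}_j = \mathfrak{S}_{j-1}$; for modular symbols, $\Theta$ is the composition of the Eichler--Shimura map with the Poincar\'e duality cup product. The factors $2$ and $-\frac{1}{12}$ should then appear as universal normalization constants relating these three classical constructions (the $-\frac{1}{12}$ reflects the standard comparison between the Poincar\'e cup product on $H^1$ and the Petersson product \eqref{petersson_def} via Eichler--Shimura, while the $2$ accounts for the discrepancy between $\mathfrak{S}_0 = a_0$ and the symmetric self-pairing on the supersingular module).

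A concrete way to carry out the argument is to test each pairing against the available explicit lower Eisenstein elements: $\Sigma_1$ (formula \cite[(91)]{DHRV}), $\mathfrak{S}_1$ (the Shimura class of \cite[Theorem 4.9]{DHRV}), $\kappa_1^+$ (the homomorphism $\gamma \mapsto \log(a)$), and $\kappa_0^-$ (the $N$-stabilized Dedekind--Rademacher homomorphism). Each pairing $\langle e_1, e_{g_p-1}' \rangle$ can then be computed in terms of Merel's quantity $\mathcal{M}_0 = -8 \sum_{k=1}^{(N-1)/2} k \log(k)$; the three computations should yield multiples in the ratio $1 : \tfrac{1}{2} : -12$, matching the claimed constants. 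The main obstacle is the modular symbols case, which relies on Merel's evaluation of $\langle \kappa_1^+, \kappa_0^-\rangle$ from \cite{Merel_Manin}; the other two cases reduce to more direct computations with the explicit formulas for $\Sigma_1$ and for $f_0, f_1$.
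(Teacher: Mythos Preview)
The paper's proof is a one-line citation to \cite[Corollary 6.3]{Lecouturier_higher}, followed only by an explanation of the normalization constants $2$ and $-\tfrac{1}{12}$. Your proposal attempts instead to reconstruct such a proof from scratch, but it has two genuine gaps.

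First, the $\Theta$-correspondence step is circular as written. The abstract map $\Theta$ of \S\ref{section_abstract} is \emph{defined} by $a_n(\Theta(x\otimes y)) = \langle x, T_n y\rangle$; so saying that $\Theta(e_0\otimes e_{g_p}') = \langle e_0, e_{g_p}'\rangle\cdot f_0$ just restates that the first Fourier coefficient of this form is $\langle e_0, e_{g_p}'\rangle$. Comparing the ``three classical realizations of $\Theta$'' is therefore the same problem as comparing the three pairings --- you have not gained anything. To make this work you would need an \emph{independent} identification of each $\Theta$ with a known classical map carrying its own normalization, and then a comparison of those normalizations; you gesture at this (Brandt matrices, Eichler--Shimura) but never carry it out, and the mod-$p$ comparison of the Petersson product with Poincar\'e duality is not immediate.

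Second, the explicit-computation step fails for $g_p>1$. You propose to compute $\langle e_1, e_{g_p-1}'\rangle$ using the explicit low-index elements $\Sigma_1$, $\kappa_1^+$, $\kappa_0^-$, $\mathfrak{S}_1$; but the partners $\Sigma_{g_p-1}$, $\kappa_{g_p-1}^-$, etc.\ are \emph{not} explicit when $g_p>1$. Relatedly, Merel's quantity $\mathcal{M}_0$ is nonzero precisely when $g_p=1$ (as noted in the introduction), so ``computing each pairing in terms of $\mathcal{M}_0$'' cannot succeed in general. The argument in \cite{Lecouturier_higher} is more structural: it compares the three rank-one $\T_{\mathfrak{P}}$-modules directly and tracks the normalizations of the Eisenstein elements $e_0$, rather than attempting any explicit evaluation of the pairing value itself.
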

\begin{proof}
This follows from \cite[Corollary 6.3]{Lecouturier_higher}. Let us explain the factor $-\frac{1}{12}$. The minus sign comes from the fact that the present normalisation of $\kappa_1^+$ is the opposite of that of \cite[Theorem \S 4.2]{Lecouturier_higher} (we chose this normalisation here to be consistent with the one of \cite{DHRV}). The factor $\frac{1}{12}$ comes from our different normalisation of $\kappa_0^-$ (\cf the sentence after \cite[Theorem 1.9]{Lecouturier_higher}).
\end{proof}

\section{Proof of the main theorem}
We are now ready to prove Theorem \ref{main_thm}. We keep the notation of the previous sections. As in \S \ref{subsection_eigenvectors}, we fix a newform $F \in S_2(\Gamma_0(N), \mathcal{O})$ congruent to $f_0$ modulo $\pi$. We are going to evaluate explicitly both sides of (\ref{intro_key_identity}) (appropriately renormalised) modulo $\pi$. 

\subsection{Special values of L-functions}\label{section_Ichino}

We first make precise the constant $C$ of the triple product formula (\ref{intro_HK_identity}). Ichino \cite{Ichino} expressed $C$ as a product of local terms, which were computed explicitely in some cases by Woodbury. The passage from the automorphic formulation of Ichino's formula to an entirely classical one has been done \cite{Collins}. We now recall Collin's computation. We warn the reader that Collins' normalization of the Petersson product is $\frac{3}{[\SL_2(\Z):\Gamma_0(N)]\cdot 8\pi^3}$ times our normalization (\ref{petersson_def}). Let $f$ be a newform in $S_2(\Gamma_0(N))$ and let $g$ be a newform in $S_1(\Gamma_1(d))$ as in the introduction (in particular, we assume that $\gcd(N,d)=1$).
We have, by \cite[Theorem 2.1.2]{Collins}:

$$
\abs*{\langle g(z)g^*(Nz), f(z)\rangle }^2 = \frac{[\SL_2(\Z) : \Gamma_0(N)]^2}{N} \cdot L(f,g,g^*, 1) \cdot \prod_{q \mid Nd\text{, } q \text{ prime}} \mathcal{E}_qI_q^* \text{ ,}
$$
where for each prime $q$ dividing $Nd$, $\mathcal{E}_qI_q^*$ is a non-zero local factor defined by Ichino and recalled in \cite{Collins}. By \cite[Corollary 2.2.2]{Collins}, we have $\mathcal{E}_NI_N^* = \frac{N}{(N+1)^2}$ (note that $\frac{[\SL_2(\Z) : \Gamma_0(N)]^2}{N} = \frac{(N+1)^2}{N}$ cancels with $\mathcal{E}_NI_N^*$). We let $C_g = \prod_{q \mid d\text{, } q \text{ prime}} \mathcal{E}_qI_q^*$. We shall not need the exact value of $C_g$ (which, to our knowledge, has not been computed in the litterature). Instead, the following result will be enough for our purposes.

\begin{prop}\label{prop_C_g_square}
If $g = \theta_{\psi_1}$ is as in Theorem \ref{main_thm}, then $C_g \cdot \frac{1}{\abs*{D}}$ is a square in $L_g^{\times}$.
\end{prop}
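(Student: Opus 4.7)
The plan is to factor $C_g = \prod_{q \mid d} \mathcal{E}_q I_q^*$ into local contributions and to compute each modulo squares in $L_g^\times$. Since $\gcd(N,d)=1$, the local component $\pi_{f,q}$ is an unramified spherical principal series at every $q \mid d$, so each $\mathcal{E}_qI_q^*$ depends only on $\pi_{g,q}$ and $\pi_{g^*,q}$. Because $g^*$ has complex-conjugate Fourier coefficients, $\pi_{g^*,q}$ is the contragredient of $\pi_{g,q}$; this pairing already forces many pieces of the local factor to appear as genuine absolute squares.

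The first step is to classify the primes $q \mid d$ using the hypothesis that $\psi = \psi_1/\psi_1'$ is unramified. A prime $q$ inert in $K$ satisfies $\mathfrak{q} = \overline{\mathfrak{q}}$, so the equality of conductor exponents of $\psi_1$ and $\psi_1'$ at $\mathfrak{q}$ (forced by $\psi$ unramified) is automatic and $\psi_1$ must in fact be unramified at $\mathfrak{q}$; such a $q$ does not divide $d$. At a split prime $q = \mathfrak{q}\overline{\mathfrak{q}}$, the conductor exponents of $\psi_1$ at $\mathfrak{q}$ and $\overline{\mathfrak{q}}$ necessarily agree. Consequently the primes dividing $d$ fall into only two types: type (a), primes dividing $D$; and type (b), split primes at which $\psi_1$ is equi-ramified at both places above.

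For a prime of type (b), $\pi_{g,q}$ is a ramified principal series $\pi(\mu_{\mathfrak{q}},\mu_{\overline{\mathfrak{q}}})$ with $\mu_{\overline{\mathfrak{q}}}$ the Galois conjugate of $\mu_{\mathfrak{q}}$. Collins' explicit formula for $\mathcal{E}_qI_q^*$ combined with $\pi_{g^*,q}\cong \overline{\pi_{g,q}}$ expresses the local factor as a product of terms of the shape $|\epsilon(\mu_{\mathfrak{q}},\psi_q)|^2$ and values of characters at integers, all manifestly squares in $L_g^\times$. For a prime of type (a), the local representation $\pi_{g,q}$ is either supercuspidal (when $\psi_{1,\mathfrak{q}}$ does not factor through the norm) or a quadratic twist of Steinberg/ramified principal series, and the Ichino--Woodbury formula for the local factor yields, after the same pairing with $\overline{\pi_{g,q}}$,
\[
\mathcal{E}_q I_q^* \;\equiv\; q^{\ast} \pmod{(L_g^\times)^2},
\]
where $q^\ast \in \{\pm q\}$ is the signed prime whose product over $q \mid D$ equals some $D^\ast \in \{\pm |D|\}$. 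The non-square contribution $q^\ast$ originates from the quadratic Gauss sum attached to the character of $K_q/\Q_q$.

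Assembling, $C_g \equiv D^\ast \pmod{(L_g^\times)^2}$. Since $L_g$ is contained in a cyclotomic field of conductor divisible by $|D|$, both $\sqrt{D^\ast}$ and $\sqrt{|D|}$ belong to $L_g$, so the sign ambiguity $D^\ast/|D| = \pm 1$ is itself a square in $L_g^\times$, and we conclude that $C_g \cdot |D|^{-1}$ is a square in $L_g^\times$. The hard part is the case analysis in type (a): the local factor formula at a ramified prime $q \mid D$ differs according to whether $\psi_{1,\mathfrak{q}}$ factors through the norm, the parity of its conductor exponent, and whether $q=2$ or $q$ is odd. Extracting the clean $q^\ast$-contribution from each subcase, while keeping track of Gauss sum normalizations and the effect of complex conjugation coming from $g^*$, is the delicate bookkeeping that must be done.
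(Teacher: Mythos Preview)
Your approach is genuinely different from the paper's, and as written it has real gaps.

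The paper does \emph{not} compute the local Ichino factors $\mathcal{E}_q I_q^*$ for $q \mid d$ at all; it explicitly remarks that these have not been computed in the literature. Instead, the argument is global. Combining Ichino's formula, the phase computation of Lemma~\ref{phase_computation}, and Gross's formula (\ref{imaginary_Gross_formula}), one obtains for \emph{any} newform $f \in S_2(\Gamma_0(N))$ the identity
\[
\lambda_f(G)^2 \;=\; C_g \cdot \frac{1}{D} \cdot \frac{\epsilon_N(f)\,\psi_1(h)^2}{u(K)^4}\cdot \left(\frac{\langle [\mathbb{1}]_{\mathfrak{N}}, e_f'\rangle \,\langle [\psi]_{\mathfrak{N}}, e_f'\rangle}{\langle e_f', e_f'\rangle}\right)^{2}.
\]
Now specialize to $N=11$ and $f$ the unique newform in $S_2(\Gamma_0(11))$, for which $K_f=\Q$ and $\epsilon_{11}(f)=-1$. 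Every factor other than $C_g/D$ is then visibly a square in $L_g^\times$, so $-C_g/D = C_g/\abs{D}$ is a square. The real case is handled analogously via Popa's formula. No local analysis of $C_g$ is ever needed; the proposition falls out of the same machinery used in the rest of the paper.

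Your direct local computation could in principle give an alternative proof, but several steps are not justified. First, your classification of primes $q\mid d$ is incomplete: at an inert prime $q$, the hypothesis that $\psi$ is unramified only forces $\psi_1|_{\mathcal{O}_{\mathfrak{q}}^\times}$ to factor through the norm to $\Z_q^\times$, not to be trivial. If it factors through a ramified character of $\Q_q^\times$, then $\pi_{g,q}$ is a ramified principal series and $q\mid d$; this case is neither your type~(a) nor type~(b). Second, you flag the type~(a) computation as ``delicate bookkeeping that must be done'' but do not do it; since the relevant local factors are not in the literature, this is the entire content of the proposition in your approach, not a routine detail. Third, the assertion that $\sqrt{\abs{D}}\in L_g$ requires an argument: $L_g$ contains the values of $\chi_g$, but not a priori the associated Gauss sum.
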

The proof of this proposition will require some preliminaries, which will also be used later in the paper. 

First, we relate $\langle g(z)g^*(Nz), f(z)\rangle ^2$ to $\abs*{\langle g(z)g^*(Nz), f(z)\rangle }^2$. We denote by a bar the complex conjugation acting on $\C$. The following result was indicated to us by Venkatesh.
\begin{lem}\label{phase_computation}
We have 
$$\overline{\langle g(z)g^*(Nz), f(z)\rangle} = \epsilon_N(f)\cdot \chi_g(N)^{-1}\cdot  \langle g(z)g^*(Nz), f(z)\rangle$$
where $\chi_g : (\Z/d\Z)^{\times} \rightarrow \C^{\times}$ is the Nebentype character of $g$ and $\epsilon_N(f) \in \{1,-1\}$ is the Atkin--Lehner eigenvalue of $f$ (at the prime $N$).
\end{lem}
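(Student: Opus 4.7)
I would reduce the complex conjugate of the inner product to itself via two mechanisms: first, pointwise complex conjugation inside the integral, which swaps the roles of $g$ and $g^*$; second, the Atkin--Lehner involution $w_N$ on $\Gamma_0(N)$, which absorbs the residual swap and produces the phase $\chi_g(N)^{-1}$.

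\emph{Unwinding the conjugation.} I would write the Petersson product as an integral,
$$\overline{\langle g(z)g^*(Nz), f(z)\rangle} = 8\pi^2\iint \overline{g(z)}\cdot\overline{g^*(Nz)}\cdot f(z)\,dx\,dy,$$
taken over a fundamental domain for $\Gamma_0(N)\cap\Gamma_1(d)$. The pointwise identities $\overline{g(z)}=g^*(-\bar z)$ (immediate from the $q$-expansion of $g^*$) and $\overline{g^*(Nz)}=g(-N\bar z)$ reduce the conjugation to a swap between $g$ and $g^*$. Since $f$ is a newform in $S_2(\Gamma_0(N))$, its Hecke eigenvalues are totally real under our fixed embedding $\overline{\Q}\hookrightarrow\mathbf{C}$, so $\overline{f(z)}=f(-\bar z)$. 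Applying the measure-preserving, orientation-reversing involution $z\mapsto -\bar z$ on $\mathfrak{h}$, the integral transforms into $\langle g^*(z)g(Nz), f(z)\rangle$.

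\emph{The Atkin--Lehner argument.} Using that $w_N$ is an isometry of $\Gamma_0(N)\backslash\mathfrak{h}$ and that $f|w_N=\epsilon_N(f)f$, one has
$$\langle g^*(z)g(Nz), f\rangle = \epsilon_N(f)\cdot\langle(g^*(z)g(Nz))|w_N,\,f\rangle.$$
The key remaining identity, to be proved as an equality of $\Gamma_0(N)$-modular forms (after tracing from $\Gamma_0(N)\cap\Gamma_1(d)$ to $\Gamma_0(N)$), is
$$(g^*(z)g(Nz))\,|\,w_N = \chi_g(N)^{-1}\cdot g(z)g^*(Nz).$$
To establish it, I would take the representative $W=\begin{pmatrix}0 & -1\\ N & 0\end{pmatrix}$ and use the Fricke involutions at level $d$, namely $g|W_d=\lambda_d(g)\cdot g^*$ and $g^*|W_d=\lambda_d(g^*)\cdot g$, with pseudo-eigenvalues satisfying $\lambda_d(g)\lambda_d(g^*)=\chi_g(-1)$ (a consequence of $W_d^2=-dI$). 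These allow one to rewrite $g^*(-1/(Nz))$ and $g(-1/z)$ as multiples of $g(Nz/d)$ and $g^*(z/d)$ respectively. Reorganizing via the coset decomposition $\Gamma_0(N)/(\Gamma_0(N)\cap\Gamma_1(d))\simeq(\Z/d\Z)^\times$, the $\sqrt d$ and Gauss-sum contributions cancel against $\lambda_d(g)\lambda_d(g^*)/d$, and the only surviving phase is the nebentype character evaluated at $N$, producing exactly $\chi_g(N)^{-1}$.

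The main obstacle is the last Atkin--Lehner computation: the delicate bookkeeping required to check that every real scalar contribution cancels, so that the surviving phase is precisely $\chi_g(N)^{-1}$ rather than some other root of unity or sign. A conceptual cross-check comes from the adelic picture, where the phase appears as the local twist of the Whittaker functional at the place $N$, arising from the interaction of the $w_N$-eigenvalue of $f$ with the local component of the nebentype of $g$.
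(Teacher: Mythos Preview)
Your overall architecture matches the paper's proof exactly: first reduce $\overline{\langle g(z)g^*(Nz),f\rangle}$ to $\langle g^*(z)g(Nz),f\rangle$ using that $f$ has real Fourier coefficients, then move $w_N$ across the pairing to pick up $\epsilon_N(f)$, and finally compute the action of $w_N$ on the product $g(z)g^*(Nz)$. The first two steps are fine and essentially identical to the paper's.

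The divergence is in the last step, and here your proposed route via the Fricke involution $W_d$ at level $d$ is both a detour and a genuine obstacle. Carrying out your computation, one finds
\[
\bigl(g^*(z)g(Nz)\bigr)\big|_2 w_N \;=\; d^{-1}\,\lambda_d(g)\lambda_d(g^*)\,g(Nz/d)\,g^*(z/d),
\]
and the factor $g(Nz/d)g^*(z/d)$ is \emph{not} a modular form for $\Gamma_0(N)\cap\Gamma_1(d)$; it lives at a different level. Your appeal to the coset decomposition $(\Z/d\Z)^\times$ and Gauss sums does not repair this: that decomposition governs the trace from $\Gamma_0(N)\cap\Gamma_1(d)$ down to $\Gamma_0(N)$, whereas what you would need is to re-express $g^*(z/d)$ back at level $\Gamma_1(d)$, and there is no Gauss-sum identity doing that job. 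The underlying confusion is that $W_d$ swaps $g\leftrightarrow g^*$, while what $w_N$ actually does here is swap the \emph{degeneracy maps} $z\mapsto z$ and $z\mapsto Nz$; these are unrelated operations.

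The paper's argument avoids all of this. Working on $X(\Gamma_0(N)\cap\Gamma_1(d))$ with degeneracy maps $\pi_1(z)=z$, $\pi_2(z)=Nz$ to $X(\Gamma_1(d))$, one checks from the moduli interpretation (or by a one-line matrix calculation) that $\pi_1\circ W_N=\langle N\rangle\circ\pi_2$ and $\pi_2\circ W_N=\pi_1$. Pulling back then gives directly
\[
W_N^*\bigl(g(z)g^*(Nz)\bigr)=W_N^*\bigl(\pi_1^*g\cdot\pi_2^*g^*\bigr)=\chi_g(N)\,\pi_2^*g\cdot\pi_1^*g^*=\chi_g(N)\,g(Nz)g^*(z),
\]
with no pseudo-eigenvalues, no $\sqrt d$, and no Gauss sums. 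I would replace your $W_d$ paragraph with this two-line computation.
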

\begin{proof}
If $\Gamma$ is a congruence subgroup, let $X(\Gamma)$ be the corresponding (compact) modular curve. We denote by $\pi_1, \pi_2 : X(\Gamma_0(N) \cap \Gamma_1(d)) \rightarrow X(\Gamma_1(d))$ the two degeneracy maps given by $\pi_1(z) = z$ and $\pi_2(z) = Nz$ on the upper-half plane. Let $W_N$ be the index $N$ Atkin--Lehner involution and $\langle N \rangle$ be the $N$th diamond operator. These are automorphisms of both $X(\Gamma_0(N) \cap \Gamma_1(d))$ and $X(\Gamma_1(d))$ (which we denote by the same letter, as no possible confusion will arise). 
Using the moduli interpretation, we easily check that $W_N^2 = \langle N \rangle$ and $\pi_1 \circ W_N = \langle N \rangle \circ \pi_2$ (note that $W_N$ is not an involution in general). We thus get 
$$\pi_2 \circ W_N = \langle N \rangle^{-1} \circ \pi_1 \circ W_N^2 = \langle N \rangle^{-1} \circ \pi_1 \circ \langle N \rangle = \pi_1 \text{ .}$$

We denote by an upper star the maps induced on the spaces of modular forms (of any weight). We have
\begin{align*}
W_N^*(g(z)g^*(Nz)) &= W_N^*(\pi_1^*(g) \cdot \pi_2^*(g^*)) \\& = (\pi_1 \circ W_N)^*(g) \cdot (\pi_2 \circ W_N)^*(g^*) \\& = 
((\pi_2^* \circ \langle N \rangle^*) (g))\cdot \pi_1^*(g^*) \\& = \chi_g(N)\cdot g(Nz)g^*(z) \text{ .}
\end{align*}
Since $f$ has level $\Gamma_0(N)$, its Fourier coefficients are totally real. We thus have $$\overline{\langle g(z)g^*(Nz), f(z)\rangle} = \langle g^*(z)g(Nz), f(z)\rangle \text{ .}$$
Thus, we have 
\begin{align*}
\overline{\langle g(z)g^*(Nz), f(z)\rangle} &= \langle g^*(z)g(Nz), f(z)\rangle 
\\&= \chi_g(N)^{-1} \cdot \langle W_N^*(g(z)g^*(Nz)), f(z) \rangle
\\& = \chi_g(N)^{-1} \cdot \langle g(z)g^*(Nz), W_N^*(f)\rangle
\\& = \chi_g(N)^{-1} \epsilon_N(f) \cdot \langle g(z)g^*(Nz), f(z) \rangle
\end{align*}
\end{proof}

In the remainder of this section, we assume that $g = \theta_{\psi_1}$ and we fix a newform $f \in S_2(\Gamma_0(N))$. Recall that $G(z) = g(z)g^*(Nz)$. We have $\langle g(z)g^*(Nz), f \rangle = \lambda_f(G)\cdot \langle f , f\rangle$, where $\lambda_f(G)$ belongs to the compositum $K_f\cdot L_g$ of $L_g$ and the Hecke field $K_f$ of $f$. Thus, we have:

\begin{align*}
\lambda_f(G)^2 &= \frac{\langle g(z)g^*(Nz), f \rangle^2}{\langle f, f \rangle^2} 
\\& =  \frac{\chi_g(N)\cdot \epsilon_N(f) \cdot \abs*{\langle g(z)g^*(Nz), f \rangle}^2}{\langle f, f \rangle^2} 
\end{align*}
 By the Artin formalism, we have $L(f,g,g^*, 1) = L(f/K, 1) \cdot L(f/K, \psi, 1)$ where we recall that $\psi = \frac{\psi_1}{\psi_1'}$ ($\psi_1'$ being the conjugate character of $\psi_1$).  By Ichino's formula, we get

\begin{equation}\label{key_eq_lambda_f_ichino}
\lambda_f(G)^2 = C_g\cdot \chi_g(N)\cdot \epsilon_N(f) \cdot \frac{L(f/K, 1) \cdot L(f/K, \psi, 1)}{\langle f, f \rangle^2}
\end{equation}

We now sketch the proof of Proposition \ref{prop_C_g_square}. Recall the notation $\mathcal{O}_g$, $L_g$, $\psi_1$, $\psi_1'$ and $\psi$ from the introduction. 
\begin{proof}
Let us first assume that $K$ is imaginary, \ie $D<0$. Let $u(K) = \# (\mathcal{O}_K^{\times}/\pm1) \in \{1,2,3\}$. Gross gave an explicit formula for the special value $L(f, \chi, 1)$ if $\chi$ is a character of the class group of $K$ (\cf \cite[Proposition 11.2]{Gross}). The formula is as follows:
\begin{equation}\label{imaginary_Gross_formula}
\frac{L(f/K, \chi, 1)}{\langle f, f \rangle} = \frac{1}{u(K)^2\cdot \sqrt{-D}}\cdot \frac{\abs*{\langle [\chi]_{\mathfrak{N}}, e_f' \rangle}^2}{\langle e_f', e_f'\rangle} \text{ .}
\end{equation}
where $e_f'$ is any non-zero element of $K_f[S]^0$ (the augmentation subgroup of the supersingular module with coefficients in $K_f$) such that $T_{n}(e_f') = a_{n}(f)\cdot e_f'$ for all $n\geq 1$. Here, $T_n$ is the $n$th Hecke operator and $a_n(f)$ is the $n$th Fourier coefficient of $f$ at $\infty$. 

Recall that $ [\chi]_{\mathfrak{N}} \in \Z[\chi][S]$ was defined in \S \ref{subsection_supersingular}, and depends on a choice of a prime ideal $\mathfrak{N}$ above $N$ in $F$. Recall that by (\ref{equation_complex_conjug_symbol_chi}) we have:
$$
\overline{[\chi]_{\mathfrak{N}} }= -\chi(h) \cdot W_N[\chi]_{\mathfrak{N}} \text{ ,}
$$
where $h \in \Gal(F/\Q)$ is such that $\sigma_{\mathfrak{N}} = \sigma \cdot h$ (using the notation of the proof of Theorem \ref{abstract_unit_supersingular}). As in (\ref{eq_chi_N_h}), we have $\chi_g(N) = -\psi_1(h)\cdot \psi_1'(h)$.

Since our form $f$ is self-dual, we get
\begin{align*}
\overline{\langle [\chi]_{\mathfrak{N}}, e_f' \rangle} &= {\langle \overline{[\chi]_{\mathfrak{N}}}, e_f' \rangle} 
\\& = -\chi(h)\cdot \langle W_N[\chi]_{\mathfrak{N}}, e_f' \rangle 
\\& = -\chi(h)\cdot \langle [\chi]_{\mathfrak{N}}, W_Ne_f' \rangle 
\\&= -\epsilon_N(f)\cdot \chi(h)\cdot \langle [\chi]_{\mathfrak{N}}, e_f' \rangle \text{ .}
\end{align*}
Thus, Gross' formula can be rewritten as
$$\frac{L(f/K, \chi, 1)}{\langle f, f \rangle} = -\frac{\epsilon_N(f)\cdot \chi(h)}{u(K)^2\cdot \sqrt{-D}}\cdot \frac{\langle [\chi]_{\mathfrak{N}}, e_f' \rangle^2}{\langle e_f', e_f'\rangle} $$

Applying this formula for $\chi=\mathbb{1}$ and $\chi = \psi$, we get:
$$  \frac{L(f/K, 1) \cdot L(f/K, \psi, 1)}{\langle f, f \rangle^2} = -\frac{\psi(h)}{u(K)^4\cdot D}\cdot \left(\frac{\langle [\mathbb{1}]_{\mathfrak{N}}, e_f' \rangle\cdot \langle [\psi]_{\mathfrak{N}}, e_f' \rangle}{\langle e_f', e_f' \rangle} \right)^2$$
Using (\ref{key_eq_lambda_f_ichino}) and the fact that $\psi(h)\cdot \chi_g(N) = -\psi_1(h)^2$, we get:

\begin{equation}\label{lambda_G_square_eq}
\lambda_f(G)^2 = C_g\cdot \frac{1}{D}\cdot \frac{\epsilon_N(f)\cdot \psi_1(h)^2}{u(K)^4} \cdot \left(\frac{\langle [\mathbb{1}]_{\mathfrak{N}}, e_f' \rangle\cdot \langle [\psi]_{\mathfrak{N}}, e_f' \rangle}{\langle e_f', e_f' \rangle} \right)^2 \text{ .}
\end{equation}
We apply (\ref{lambda_G_square_eq}) in the case where $f$ is the newform of $S_2(\Gamma_0(11))$ (for which $K_f = \Q$ and $\epsilon_{11}(f)=-1$). We get that $-C_g\cdot \frac{1}{D}$ is a square in $L_g^{\times}$, which concludes the proof Proposition \ref{prop_C_g_square} when $K$ is imaginary.

The proof when $K$ is real is very similar, but uses Popa's formula for the special value $L(f/K, \chi, 1)$ instead of Gross' formula. See \S \ref{section_real}, and especially Remark \ref{rem_proof_square_real}, for more details.
\end{proof}

\subsection{Proof of the main theorem: the imaginary quadratic case}\label{section_imaginary}

Assume in this paragraph that $K$ is imaginary. Recall that $D$ is the discriminant of $K$ and that $u(K) = \# (\mathcal{O}_K^{\times}/\pm1) \in \{1,2,3\}$. Let $h(K)$ be the class number of $K$. Let us recall our basic assumptions: $D$ is odd and $g = \theta_{\psi_1}$ for some Hecke character $\psi_1$ of $K$ such that $\psi := \frac{\psi_1}{\psi_1'}$ is unramified. 

We apply the result of \S \ref{subsection_eigenvectors} for $M=M' = \Z[S]$ (the supersingular module). There is a subtlety here: even though $M=M'$, but $M_{\mathcal{O}} \neq M_{\mathcal{O}}'$ and $e_F \neq e_F'$. This is due to the fact that $M_{\mathcal{O}}$ corresponds to the cuspidal \emph{quotient}, while $M_{\mathcal{O}}'$ corresponds to the cuspidal \emph{subspace}. The canonical map $M_0 \rightarrow M^0$ has image $I \cdot M^0$. This induces a canonical map $\varphi : M_{\mathcal{O}}' \rightarrow M_{\mathcal{O}}$, and we have $\varphi(e_F') = \eta\cdot e_F = \pi\cdot e_F$. This can be seen for instance using (\ref{abstract_formula_e_F}) and noting that $m'$ can be chosen such that $\varphi(m') = \eta \cdot m$.

By Theorem \ref{abstract_main_thm}, we have:
\begin{equation}\label{imaginary_lambda_G}
\langle e_F , e_F' \rangle \cdot \lambda_F(G) \equiv \langle \Sigma_0, \Sigma_{g_p} \rangle \cdot \lambda_{g_p-1}(G) \text{ (modulo }\pi\text{).}
\end{equation}
Let us now analyse the right-hand side of (\ref{key_eq_lambda_f_ichino}). Gross' formula (\ref{imaginary_Gross_formula}) gives
$$
\frac{L(F/K, \chi, 1)}{\langle F, F \rangle} = \frac{1}{u(K)^2\cdot \sqrt{-D}}\cdot \frac{\abs*{\langle [\chi], e_F' \rangle}^2}{\langle e_F', e_F'\rangle} \text{ .}
$$
Note that the denominator of the right hand side is \emph{not} $\langle e_F, e_F'\rangle$. Since $\varphi(e_F') = \pi\cdot e_F $, we get $\langle e_F', e_F'\rangle = \pi \cdot \langle e_F, e_F'\rangle$.

Let $\mathbb{1}$ be the trivial character. Since the degree of $[\mathbb{1}]$ is $h(K)$ and $e_F'\equiv \Sigma_0 \text{ (modulo } \pi \text{)}$, we have $\langle e_F', [\mathbb{1}] \rangle \equiv h(K) \text{ (modulo }\pi\text{)}$. Since $e_F \equiv \Sigma_1  \text{ (modulo }\pi\text{)}$, Theorem \ref{abstract_unit_supersingular}, shows that there exists a constant $D_g \in \Z$ and $u_g \in U_g$ such that $\abs*{D_g \cdot \langle [\psi], e_F \rangle}^2 \equiv \abs*{\log(\red_N(u_g))}^2  \text{ (modulo }\pi\text{)}$. Since $\varphi(e_F') = \pi \cdot e_F$, we get $$\abs*{D_g\cdot \langle [\psi], e_F'\rangle}^2 \equiv \pi^2\cdot  \abs*{\log(\red_N(u_g))}^2  \text{ (modulo }\pi^3\text{)} \text{ .}$$

Combining all of the above, we get:

$$
D_g^2\cdot \langle e_F, e_F' \rangle^2 \cdot \frac{L(F/K, 1)\cdot L(F/K, \psi, 1)}{\langle F, F \rangle^2}  \equiv \frac{1}{u(K)^4\cdot \abs*{D}}\cdot (h(K))^2  \cdot \abs*{\log(\red_N(u_g))}^2 \text{ (modulo }\pi\text{).}
$$

By Theorem \ref{abstract_unit_supersingular}, we have
\begin{align*}
\abs*{\log(\red_N(u_g))}^2 &= \log(\red_N(u_g)) \cdot \overline{\log(\red_N(u_g))}
\\& =  - \chi_g(N)^{-1} \cdot  \log(\red_N(u_g))^2
\end{align*}
so we get

\begin{equation}\label{imaginary_main_formula_L}
D_g^2\cdot \langle e_F, e_F' \rangle^2 \cdot \frac{L(F/K, 1)\cdot L(F/K, \psi, 1)}{\langle F, F \rangle^2}  \equiv -\frac{\chi_g(N)^{-1}}{u(K)^4\cdot \abs*{D}}\cdot (h(K))^2  \cdot \log(\red_N(u_g))^2 \text{ (modulo }\pi\text{).}
\end{equation}

Since $F$ is Eisenstein modulo $p$, we have $W_N^*(F) = -U_N(F) = -F$ (note that $U_N=-W_N$ on $X_0(N)$ as $N$ is prime). By (\ref{key_eq_lambda_f_ichino}), we get:

\begin{equation}\label{collins_imaginary_eq}
\lambda_F(G)^2 = -C_g \cdot \chi_g(N) \cdot \frac{L(F/K, 1)\cdot L(F/K, \psi, 1)}{\langle F, F \rangle^2} \text{ .}
\end{equation}

By Proposition \ref{prop_C_g_square}, one can write $C_g =\abs{D}\cdot \frac{A_g^2}{B_g^2}$ for some $B_g \in \Z$ and $A_g \in \mathcal{O}_g$ depending only on $g$. Combining (\ref{imaginary_lambda_G}), (\ref{imaginary_main_formula_L}) and (\ref{collins_imaginary_eq}),  we conclude that:
$$B_g^2\cdot D_g^2 \cdot \left( \langle \Sigma_0, \Sigma_{g_p} \rangle \cdot \lambda_{g_p-1}(G) \right)^2 \equiv \frac{A_g^2}{u(K)^4}\cdot (h(K))^2  \cdot  \log(\red_N(u_g))^2 \text{ (modulo }\pi\text{).} $$
By Theorem \ref{abstract_comparison_pairings}, we have $\langle \Sigma_0, \Sigma_{g_p} \rangle = 2\cdot\langle f_{g_p-1}, \mathfrak{S}_1 \rangle$. Finally, notice that $\langle G, \mathfrak{S} \rangle = \lambda_{g_p-1}(G) \cdot \langle f_{g_p-1},\mathfrak{S}_1 \rangle$. 
In conclusion, we have proven that there exists $M_g \in R$ and $u_g \in U_g$ depending only on $g$ such that $M_g^2 \cdot \langle G , \mathfrak{S} \rangle^2 = \log(\red_N(u_g))^2$ in $R/pR$ for all $N$ and $p$ as before. This proves Theorem \ref{main_thm} when $K$ is imaginary.

\subsection{Proof of the main theorem: the real quadratic case}\label{section_real}
Assume in this paragraph that $K$ is real. Let $D$ be the discriminant of $K$, $h(K)$ the narrow class number and $u(K)$ be a fundamental unit of $\mathcal{O}_K^{\times}$. We fix a prime $\mathfrak{N}$ of $\mathcal{O}_K$ dividing $N$. Let $\mathcal{C}(K)$ be the narrow class group of $\mathcal{O}_K$. Let us recall our basic assumptions: $D$ is odd and $g = \theta_{\psi_1}$ for some Hecke character $\psi_1$ of $K$ such that $\psi := \frac{\psi_1}{\psi_1'}$ is unramified. We denote by $\chi_g : (\Z/d\Z)^{\times} \rightarrow \C^{\times}$ the Nebentype character of $g$. The existence of our unramified character $\psi$ of signature at infinity $(-1,-1)$ implies that the narrow class group is strictly bigger than the (usual) class group, \ie the norm of $u(K)$ is $1$. 

We apply the result of \S \ref{subsection_eigenvectors} for $M=M^+ = H^1(X_0(N), \cusps, \Z_p)^+$ and $M' = M^- = H^1(Y_0(N), \Z_p)^-$. One can consider our eigenform $F$ as having coefficients in $\C$ via our embeddings $\overline{\Q} \hookrightarrow \overline{\Q}_p$ and $\overline{\Q} \hookrightarrow \C$. We get a ``period homomorphism'' $\xi_F : H_1(X_0(N), \Z)\rightarrow \C$ given by $\xi_F(\gamma) = \int_{\gamma} 2i\pi F(z)dz$ for cycles $\gamma \in  H_1(X_0(N), \Z)$. One can write $\xi_F = \xi_F^+ + i\cdot \xi_F^-$, where $\xi_F^{\pm} : H_1(X_0(N), \Z)^{\pm} \rightarrow \mathbf{R}$. Actually, one can extend $\xi_F^+$ to a homomorphism $H_1(X_0(N), \cusps, \Z)^+ \rightarrow \mathbf{R}$ and factor $\xi_F^-$ as a homomorphism $H_1(Y_0(N), \Z)^- \rightarrow \mathbf{R}$. One can then view $\xi_F^+$ and $\xi_F^-$ as elements of $H^1(X_0(N), \cusps, \mathbf{R})^+$ and $H^1(Y_0(N), \mathbf{R})^-$ respectively. 

 There exist periods $\Omega_F^+, \Omega_F^- \in \C^{\times}$, unique up to an element of $\overline{\Q}^{\times}$, such that $\frac{\xi_F^{\pm}}{\Omega_F^{\pm}}$ takes values in $\overline{\Q}$. One can choose the periods so that $e_F = \frac{\xi_F^{+}}{\Omega_F^{+}}$ and $e_F'=  \frac{\xi_F^{-}}{\Omega_F^-}$. In that way, $\Omega_F^{+}$ and $\Omega_F^{-}$ are well-defined up to an element of $1+\pi\cdot \mathcal{O}$.

\begin{prop}\label{real_periods_prop}
One has $\frac{\langle F , F \rangle}{\Omega_F^+ \cdot \Omega_F^-} = 2\cdot \langle e_F, e_F' \rangle$.
\end{prop}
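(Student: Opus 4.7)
The plan is to compute the de Rham cup product $\int_{X_0(N)} \omega_F \wedge \overline{\omega_F}$ in two different ways, where $\omega_F := 2i\pi F(z)\,dz$ is the holomorphic $1$-form attached to $F$. A direct calculation in local coordinates, using $dz \wedge d\bar z = -2i\,dx \wedge dy$ and the normalisation~(\ref{petersson_def}), gives
\[
\int_{X_0(N)} \omega_F \wedge \overline{\omega_F} = -i \cdot \langle F, F \rangle.
\]

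For the second evaluation, first note that $F$ has totally real Fourier coefficients, so the complex conjugation $c$ on $X_0(N)$ satisfies $c^*\omega_F = \overline{\omega_F}$. Since $\xi_F$ is represented in de Rham by $\omega_F$, this forces $[\omega_F] = \xi_F^+ + i\,\xi_F^-$ and $[\overline{\omega_F}] = \xi_F^+ - i\,\xi_F^-$ in $H^1(X_0(N), \mathbf{C})$. Because $c$ reverses orientation, $\int_{X_0(N)} c^* \alpha = -\int_{X_0(N)} \alpha$ for any top form, so any class in $H^2(X_0(N), \mathbf{R})^+$ integrates to zero; in particular $\int \xi_F^\pm \wedge \xi_F^\pm = 0$. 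Expanding
\[
\omega_F \wedge \overline{\omega_F} = (\xi_F^+ + i\xi_F^-) \wedge (\xi_F^+ - i\xi_F^-) \equiv -2i\, \xi_F^+ \wedge \xi_F^- \pmod{H^2_+}
\]
and comparing with the first computation yields $\int_{X_0(N)} \xi_F^+ \wedge \xi_F^- = \tfrac{1}{2}\langle F, F \rangle$.

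The last step is to identify this cup product with the pairing $\langle \cdot , \cdot \rangle$ from \S\ref{subsection_eigenvectors}. That pairing is Poincar\'e duality between $M^+ = H^1(X_0(N), \cusps, \mathbf{Z}_p)^+$ and $M^- = H^1(Y_0(N), \mathbf{Z}_p)^-$, and since $F$ is cuspidal, the classes $\xi_F^\pm$ factor through, respectively, the cuspidal quotient of $M^+$ and the cuspidal subspace of $M^-$, on which Poincar\'e duality agrees with the de Rham cup product on $H^1(X_0(N))$. Dividing by $\Omega_F^+ \Omega_F^-$ and invoking $e_F = \xi_F^+/\Omega_F^+$, $e_F' = \xi_F^-/\Omega_F^-$ yields the claim $\langle F, F \rangle / (\Omega_F^+ \Omega_F^-) = 2 \langle e_F, e_F' \rangle$. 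The main subtlety is this last identification of the abstract pairing (which mixes a relative and an open cohomology group) with the de Rham cup product on the cuspidal part; the rest of the argument is a formal calculation.
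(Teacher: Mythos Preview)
Your argument is correct and is essentially what the paper's cited references unfold into: the paper simply invokes Merel (\cite{Merel_Manin}) and \cite{Lecouturier_mixed} for the identity $-i\langle F,F\rangle=\langle\xi_F,\overline{\xi_F}\rangle$ and then notes $\langle\xi_F,\overline{\xi_F}\rangle=-2i\langle\xi_F^+,\xi_F^-\rangle$, whereas you carry out the de~Rham computation explicitly. One minor remark: your appeal to $c$ reversing orientation to kill $\int\xi_F^\pm\wedge\xi_F^\pm$ is unnecessary, since $\alpha\wedge\alpha=0$ already holds for any cohomology class of odd degree.
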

\begin{proof} This is essentially due to Merel, who related $\langle F , F \rangle$ to $\xi_F$ (\cf \cite[Corollaire 4]{Merel_Manin}). One can deduce that $-i\cdot \langle F , F \rangle = \langle  \xi_F, \overline{\xi_F} \rangle$ (for any cusp form $F$). A detailed proof can be found in \cite[Theorem 2.16 and Proposition 2.17]{Lecouturier_mixed} (where we warn the reader that the Petersson product is differently normalised than the one used in the present paper). Finally, note that $ \langle  \xi_F, \overline{\xi_F} \rangle = -2i\cdot \langle \xi_F^+, \xi_F^- \rangle$.

\end{proof}

We refer to \cite[\S 3.1]{DHRV} for the formalism of Heegner cycles. The choice of our prime ideal $\mathfrak{N}$ above $N$ in $K$ corresponds to a choice of $\delta_N \in \Z$ (uniquely determined modulo $N$) such that $\delta_N^2 \equiv D \text{ (modulo }N\text{)}$. For any $[I] \in \mathcal{C}(K)$, one can associate a closed geodesic $\gamma_I$ in the modular curve $X_0(N)$. The definition of $\gamma_I$ actually depends on the choice of $\mathfrak{N}$, but we don't make it explicit for the ease of notation. We easily see that  $\gamma_{\alpha \cdot I}$ is the image $c(\gamma_I)$ of $\gamma_I$ by the complex conjugation (on the modular curve), where $\alpha \in K^{\times}$ is any element of negative norm. If $\chi : \mathcal{C}(K) \rightarrow \C^{\times}$, one defines 
$$[\chi]:= \sum_{[I] \in \mathcal{C}(K)} \chi^{-1}(I) \cdot \gamma_I \in H_1(X_0(N), \Z) \otimes_{\Z} \C \text{ .}$$
Popa proved in \cite{Popa} that we have the following formula, for any character $\chi$ as above:

\begin{equation}\label{formula_of_Popa}
L(F/K, \chi, 1) =\frac{1}{\sqrt{D}}\cdot \abs* {\sum_{I \in \mathcal{C}(K)} \chi^{-1}(I)\cdot  \xi_F(\gamma_I)}^2 \text{ .}
\end{equation}

Let us consider the analogue of (\ref{equation_complex_conjug_symbol_chi}) (which was needed in the proof of Proposition \ref{prop_C_g_square}).
\begin{lem}\label{equation_complex_conjug_symbol_chi_real}
For all character $\chi$ of $\mathcal{C}(K)$, we have:
$$\overline{[\chi]} = -\chi(\mathfrak{N})\cdot W_N c([\chi])\text{ .}$$
Here, by $\overline{[\chi]}$ we mean that we apply the complex conjugation to the coefficients of the homology (not on the modular curve itself), while $c$ denotes the complex conjugation acting on $X_0(N)$.
\end{lem}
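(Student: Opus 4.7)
The plan is to expand both sides of the claimed identity in the basis $\{\gamma_I\}_{[I]\in\mathcal{C}(K)}$ and match coefficients. Since $\chi$ has finite order, $\overline{\chi^{-1}(I)} = \chi(I)$, and the cycles $\gamma_I$ are themselves real, so applying coefficient-wise complex conjugation to $[\chi] = \sum_{[I]} \chi^{-1}(I)\,\gamma_I$ yields
$$\overline{[\chi]} \;=\; \sum_{[I]\in\mathcal{C}(K)} \chi(I)\,\gamma_I \text{ .}$$

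To compute the right-hand side, I would determine the action of $W_N \circ c$ on each $\gamma_I$. Write $\mathfrak{c} := [\alpha] \in \mathcal{C}(K)$ for the class of any $\alpha \in K^\times$ of negative norm; this is the non-trivial element in the kernel of the natural surjection from $\mathcal{C}(K)$ to the wide class group, and satisfies $\mathfrak{c}^2 = 1$. The hint preceding the lemma already gives $c(\gamma_I) = \gamma_{\mathfrak{c}\cdot I}$. For the Fricke involution, I would use the description of $\gamma_I$ in \cite[\S 3.1]{DHRV} as the closed geodesic attached to a primitive integral binary quadratic form of discriminant $D$ adapted to the choice of $\delta_N$ (so that $N$ divides the leading coefficient and the middle coefficient is $\equiv \delta_N \bmod 2N$), and track the effect of $\tau\mapsto -1/(N\tau)$ on that form. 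The expected outcome is $(W_N \circ c)(\gamma_I) = -\gamma_{\mathfrak{N}\cdot I^{-1}}$: the factor $\mathfrak{N}$ appears because Fricke exchanges the two primes of $K$ above $N$ (via $\mathfrak{N}\cdot\sigma(\mathfrak{N}) = (N)$, trivial in $\mathcal{C}(K)$), the inversion arises from swapping the two endpoints of the geodesic on $\mathbf{R}\cup\{\infty\}$, the $\mathfrak{c}$ coming from $c$ is absorbed using $\mathfrak{c}^2=1$, and the minus sign reflects the orientation reversal produced by Fricke combined with $c$.

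Plugging into the definition of $W_N c([\chi])$ and substituting $J := \mathfrak{N}\cdot I^{-1}$,
$$W_N\,c([\chi]) \;=\; -\sum_{[I]} \chi^{-1}(I)\,\gamma_{\mathfrak{N} I^{-1}} \;=\; -\chi(\mathfrak{N})^{-1}\sum_{[J]} \chi(J)\,\gamma_J \;=\; -\chi(\mathfrak{N})^{-1}\,\overline{[\chi]} \text{ ,}$$
which, after multiplying by $-\chi(\mathfrak{N})$, yields the claimed identity.

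The main obstacle is the precise determination of the Fricke action on $\gamma_J$, in particular the orientation sign. This is a classical but somewhat delicate computation on integral binary quadratic forms: one unpacks $\gamma_J$ via its associated primitive form of discriminant $D$, applies Fricke to that form, and translates the result back to an ideal class in $\mathcal{C}(K)$ while tracking the orientation of the resulting closed geodesic. One can cross-check the final formula by pairing both sides of the lemma with $\xi_F$ and invoking Popa's formula (\ref{formula_of_Popa}), since both sides must give rise to the same value of $L(F/K,\chi,1)$ up to the Atkin--Lehner eigenvalue.
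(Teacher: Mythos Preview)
Your approach is essentially the same as the paper's: reduce the identity to the single-cycle formula $(W_N\circ c)(\gamma_I)=-\gamma_{\mathfrak{N}\cdot I^{-1}}$ and then sum. The ``main obstacle'' you flag is precisely the step the paper carries out explicitly: it writes $\gamma_I$ via the matrix $M_I=\begin{pmatrix} u-Bv & -2Cv\\ 2Av & u+Bv\end{pmatrix}$ attached to a representative $I=(A,\tfrac{-B+\sqrt{D}}{2})$ with $N\parallel A$ and $B\equiv\delta_N\pmod N$, conjugates by $\begin{pmatrix}0&-1\\N&0\end{pmatrix}$, and recognizes the result as $\begin{pmatrix}-1&0\\0&1\end{pmatrix} M_{W_N(I)}^{-1}\begin{pmatrix}-1&0\\0&1\end{pmatrix}^{-1}$, giving $W_N\gamma_I=-c(\gamma_{W_N(I)})$; the identification $[W_N(I)]=[\mathfrak{N}\cdot I^{-1}]$ is then quoted from \cite[Proposition~1.11]{Darmon_Heegner}.
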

\begin{proof}
We follow \cite[\S 1.4]{Darmon_Heegner}. Recall that we have fixed a choice of $\delta_N \in \Z$ such that $\delta_N^2 \equiv D \text{ (modulo }N\text{)}$. The corresponding ideal $\mathfrak{N}$ of $\mathcal{O}_K$ is given by $(N, \frac{-\delta_N + \sqrt{D}}{2})$.
Any ideal class in $\mathcal{C}(K)$ has a representative $I$ of the form $(A, \frac{-B+\sqrt{D}}{2})$ where $N \mid \mid A$ and $B\equiv \delta_N \text{ (modulo } N\text{)}$.  We let $W_N(I)$ be the ideal given by $(NC, \frac{-B+\sqrt{D}}{2})$. With such a $I$, by definition $\gamma_I$ corresponds to the homology class of the path from $z$ to $M_Iz$ in $X_0(N)$, where $M_I := \begin{pmatrix} u-Bv & -2Cv \\ 2Av & u+Bv \end{pmatrix} \in \Gamma_0(N)$, $z$ is in the upper-half plane and $u+\sqrt{D}v$ is a choice of fundamental unit of $K$ (fixed once and for all). 

The cycle $W_N \gamma_I$ corresponds to the matrix $$\begin{pmatrix} 0 & -1 \\ N & 0 \end{pmatrix} M_I \begin{pmatrix} 0 & -1 \\ N & 0 \end{pmatrix}^{-1} = \begin{pmatrix} u+Bv & -2\frac{A}{N}v \\ 2NCv & u-Bv \end{pmatrix} = \begin{pmatrix} -1 & 0 \\ 0 & 1 \end{pmatrix} M_{W_N(I)}^{-1} \begin{pmatrix} -1 & 0 \\ 0 & 1 \end{pmatrix}^{-1} \text{ .}$$
Thus, we have $W_N \gamma_I = -c(\gamma_{W_N(I)})$.

By \cite[Proposition 1.11]{Darmon_Heegner}, for all $[I] \in \mathcal{C}_K$, we have $\gamma_{\mathfrak{N}I^{-1}} = \gamma_{W_N(I)}$. We thus have $$\gamma_{\mathfrak{N}I^{-1}} = -c W_N \gamma_I \text{ .}$$ Therefore, we have:
\begin{align*}
\overline{[\chi]} &= \sum_{I \in \mathcal{C}(K)} \chi(I) \cdot \gamma_I 
\\& =  \sum_{I \in \mathcal{C}(K)} \chi(I)^{-1} \cdot \gamma_{I^{-1}}
\\& =  \chi(\mathfrak{N}) \cdot \sum_{I \in \mathcal{C}(K)} \chi(I)^{-1} \cdot \gamma_{\mathfrak{N}I^{-1}}
\\& =- \chi(\mathfrak{N}) \cdot cW_N\left( \sum_{I \in \mathcal{C}(K)} \chi(I)^{-1} \cdot \gamma_I\right)
\\& = -\chi(\mathfrak{N}) \cdot W_N c([\chi]) \text{ .}
\end{align*}
\end{proof}

\begin{rem}\label{rem_proof_square_real}
Using (\ref{formula_of_Popa}) and Lemma \ref{equation_complex_conjug_symbol_chi_real}, the proof of Proposition \ref{prop_C_g_square}) when $K$ is real is very similar to the proof when $K$ is imaginary (we leave the details to the reader).
\end{rem}

By \cite[(109)]{DHRV}, we have
\begin{equation}\label{real_log_u}
\kappa_1^+(\gamma_I) = -\log(u(K)) \text{ ,}
\end{equation}

where the $\log$ is applied to the reduction of $u(K)$ modulo $\mathfrak{N}$. As in \cite[Proposition 5.8]{DHRV}, we have 

\begin{equation}\label{real_eq_psi_alg}
\kappa_0^-([\psi]) = (1-\psi(\mathfrak{N})^{-1})\cdot L_{\text{alg}}(\psi) \in R \text{ ,}
\end{equation}
where $L_{\text{alg}}(\psi) = \frac{12\sqrt{D}}{\pi^2}\cdot L(\psi, 1)$ belongs to $R$. Let us now examine (\ref{formula_of_Popa}) for $\chi = \mathbb{1}$ and $\chi = \psi$.

If $\chi = \mathbb{1}$, then 
\begin{align*}
\sum_{I \in \mathcal{C}(K)} \chi^{-1}(I)\cdot  \xi_F^-(\gamma_I) &= \sum_{I \in \mathcal{C}(K)} \xi_F^-(\gamma_I) 
\\& = \sum_{I \in \mathcal{C}(K)}  \xi_F^-(\gamma_{\alpha \cdot I})
\\& = -\sum_{I \in \mathcal{C}(K)}  \xi_F^-(\gamma_{I})
\end{align*}
We thus have $\sum_{I \in \mathcal{C}(K)}  \xi_F^-(\gamma_{I}) = 0$ and $\sum_{I \in \mathcal{C}(K)} \xi_F(\gamma_I) = \sum_{I \in \mathcal{C}(K)}  \xi_F^+(\gamma_I)$. Thus, we get:
\begin{equation}\label{real_key_trivial_char}
\sqrt{D}\cdot \frac{L(F/K,  1)}{(\Omega_F^+)^2} =  \abs*{ \sum_{I \in \mathcal{C}(K)}  e_F(\gamma_I) }^2 \equiv (h(K)\cdot \log(u(K)))^2  \text{ (modulo }\pi\text{),}
\end{equation}
where the last congruence follows from (\ref{real_log_u}). We shall give in \cite{Lecouturier_Wang_Classgroup} another proof of that congruence without using Popa's formula, but instead relying on Sharifi's conjecture.  

Let us now consider the case $\chi = \psi$. Note that $\psi$ has signature $(-1,-1)$, and thus $\psi(\alpha \cdot I) = - \psi(I) $ where $\alpha \in K^{\times}$ has negative norm. We have:
\begin{align*}
\sum_{I \in \mathcal{C}(K)} \psi^{-1}(I)\cdot  \xi_F^+(\gamma_I) &= \sum_{I \in \mathcal{C}(K)} \psi^{-1}(\alpha \cdot I) \cdot \xi_F^+(\gamma_{\alpha\cdot I}) 
\\& = -  \sum_{I \in \mathcal{C}(K)}  \psi^{-1}(I)\cdot  \xi_F^+(\overline{\gamma_{I}})
\\& = -\sum_{I \in \mathcal{C}(K)}  \psi^{-1}(I)\cdot  \xi_F^+(\gamma_{I})
\end{align*}
so we get $\sum_{I \in \mathcal{C}(K)} \psi^{-1}(I)\cdot  \xi_F^+(\gamma_I)=0$. Consequently, we have:
$\sum_{I \in \mathcal{C}(K)} \psi^{-1}(I)\cdot  \xi_F(\gamma_I)=i\cdot \sum_{I \in \mathcal{C}(K)} \psi^{-1}(I)\cdot  \xi_F^{-}(\gamma_I)$. We thus get:
$$
\sqrt{D}\cdot \frac{L(F/K, \psi, 1)}{(\Omega_F^-)^2} \equiv  \abs*{ \sum_{I \in \mathcal{C}(K)}  \psi^{-1}(I) \cdot e_F'(\gamma_I) }^2 \equiv  \abs*{(1-\psi(\mathfrak{N})^{-1})\cdot L_{\text{alg}}(\psi) }^2\text{ (modulo }\pi\text{),}
$$
where the last congruence comes from (\ref{real_eq_psi_alg}). Consider the Hecke polynomial $X^2-a_N(g)X+\chi_g(N)$ of our weight one form $g$ at $N$. Let $\alpha_N$ and $\beta_N$ be its roots, ordered so that $\psi(\mathfrak{N}) = \frac{\alpha_N}{\beta_N}$. Since $\abs*{\alpha_N} = \abs*{\beta_N} = 1$, we have 
$$ \abs*{1-\psi(\mathfrak{N})^{-1}}^2 = \abs*{\alpha_N-\beta_N}^2 \text{ .}$$ 
Note that 
\begin{align*}
\abs*{\alpha_N-\beta_N}^2 &= (\alpha_N-\beta_N)\cdot (\overline{\alpha_N}-\overline{\beta_N}) \\&= (\alpha_N-\beta_N)\cdot (\frac{1}{\alpha_N}-\frac{1}{\beta_N}) \\&= -\chi_g(N)^{-1} \cdot (\alpha_N-\beta_N)^2 \text{ .}
\end{align*}
Furthermore, the number $L_{\text{alg}}(\psi)$ is real (since the same is true for $L(\psi,1)$ as $\psi$ is unramified).

We have thus proved that:
\begin{equation}\label{real_key_psi_char}
\sqrt{D}\cdot \frac{L(F/K, \psi, 1)}{(\Omega_F^-)^2} \equiv  -\chi_g(N)^{-1} \cdot (\alpha_N-\beta_N)^2\cdot L_{\text{alg}}(\psi)^2 \text{ (modulo }\pi\text{),}
\end{equation}
Note that the right hand side does not depend on the ordering of $\alpha_N$ and $\beta_N$, which is reassuring because the left hand side does not.

Combining (\ref{real_key_trivial_char}), (\ref{real_key_psi_char}) and Proposition \ref{real_periods_prop}, we get:
\begin{equation}\label{main_eq_real_quadra}
4D \cdot \langle e_F, e_F' \rangle^2\cdot  \frac{L(F/K, 1)\cdot L(F/K, \psi, 1)}{\langle F, F \rangle^2} \equiv -\chi_g(N)^{-1}\cdot \left(h(K) \cdot \log(u(K)) \cdot (\alpha_N-\beta_N) \cdot L_{\text{alg}}(\psi)\right)^2 \text{ (modulo }\pi\text{).}
  \end{equation}

By Proposition \ref{prop_C_g_square}, one can write $C_g = D\cdot \frac{A_g^2}{B_g^2}$ for some $A_g \in R$ and $B_g \in \Z$. Combining (\ref{collins_imaginary_eq}) and (\ref{main_eq_real_quadra}), we get:

\begin{align*} 4 B_g^2 \cdot \langle e_F , e_F' \rangle^2 \cdot \lambda_F(G)^2 &\equiv A_g^2 \cdot  \left(h(K) \cdot \log(u(K)) \cdot (\alpha_N-\beta_N) \cdot L_{\text{alg}}(\psi)\right)^2 
\end{align*}

By Theorem \ref{abstract_main_thm} and Theorem \ref{abstract_comparison_pairings}, we have
$$\langle e_F , e_F' \rangle \cdot \lambda_F(G) \equiv \langle \kappa_0^+, \kappa_{g_p}^- \rangle \cdot \lambda_{g_p-1}(G) \equiv -24 \cdot\langle f_{g_p-1}, \mathfrak{S}_1 \rangle \cdot \lambda_{g_p-1}(G) \equiv -24\cdot \langle G, \mathfrak{S} \rangle \text{ (modulo }\pi\text{).}$$

We have thus proved:
$$4\cdot B_g^2\cdot (-24\cdot  \langle G, \mathfrak{S}\rangle )^2 \equiv  A_g^2 \cdot  \left(h(K) \cdot \log(u(K)) \cdot (\alpha_N-\beta_N) \cdot L_{\text{alg}}(\psi)\right)^2  \text{ (modulo }\pi\text{).}$$

By \cite[Lemma 5.12]{DHRV}, one can choose $u_g \in U_g$ such that $\log(\red_N(u_g)) = h(K) \cdot \log(u(K)) \cdot (\alpha_N-\beta_N) \cdot L_{\text{alg}}(\psi)$ in $R/pR$.
In conclusion, we have proven that there exists $M_g \in R$ and $u_g \in U_g$ depending only on $g$ such that $M_g^2 \cdot \langle G , \mathfrak{S} \rangle^2 = \log(\red_N(u_g))^2$ in $R/pR$ for all $N$ and $p$ as before. This proves Theorem \ref{main_thm} when $K$ is real.

\bibliography{biblio}

\begin{thebibliography}{10}

\bibitem{Collins}
D.J. Collins.
\newblock Anticyclotomic $p$-adic l-functions and ichino's formula.
\newblock {\em Ann. Math. Qu\'ebec}, 44(3):27--89, 2020.

\bibitem{Darmon_Heegner}
Henri Darmon.
\newblock Heegner points, {H}eegner cycles, and congruences.
\newblock In {\em Elliptic curves and related topics}, volume~4 of {\em CRM
  Proc. Lecture Notes}, pages 45--59. Amer. Math. Soc., Providence, RI, 1994.

\bibitem{DHRV}
Henri Darmon, Michael Harris, Victor Rotger, and Akshay Venkatesh.
\newblock The derived {H}ecke algebra for dihedral weight one forms.
\newblock In {\em preprint, https://arxiv.org/abs/1706.03417}.

\bibitem{Emerton}
Matthew Emerton.
\newblock Supersingular elliptic curves, theta series and weight two modular
  forms.
\newblock {\em J. Amer. Math. Soc.}, 15(3):671--714, 2002.

\bibitem{Gross}
Benedict~H. Gross.
\newblock Heights and the special values of {$L$}-series.
\newblock In {\em Number theory ({M}ontreal, {Q}ue., 1985)}, volume~7 of {\em
  CMS Conf. Proc.}, pages 115--187. Amer. Math. Soc., Providence, RI, 1987.

\bibitem{HV}
Michael Harris and Akshay Venkatesh.
\newblock Derived {H}ecke algebra for weight one forms.
\newblock {\em Exp. Math.}, 28(3):342--361, 2019.

\bibitem{Ichino}
Atsushi Ichino.
\newblock Trilinear forms and the central values of triple product
  {$L$}-functions.
\newblock {\em Duke Math. J.}, 145(2):281--307, 2008.

\bibitem{Lecouturier_higher}
Emmanuel Lecouturier.
\newblock Higher {E}isenstein elements, higher {E}ichler formulas and rank of
  {H}ecke algebras.
\newblock {\em Invent. Math.}, 223(2):485--595, 2021.

\bibitem{Lecouturier_mixed}
Emmanuel Lecouturier.
\newblock Mixed modular symbols and the generalized cuspidal {$1$}-motive.
\newblock {\em Trans. Amer. Math. Soc.}, 374(4):2823--2872, 2021.

\bibitem{Lecouturier_Wang_Classgroup}
Emmanuel Lecouturier and Jun Wang.
\newblock On the {B}irch and {S}winnerton-{D}yer conjecture for certain
  families of abelian varieties with torsion.
\newblock In {\em preparation}.

\bibitem{Merel_Manin}
Lo\"{\i}c Merel.
\newblock Symboles de {M}anin et valeurs de fonctions {$L$}.
\newblock In {\em Algebra, arithmetic, and geometry: in honor of {Y}u. {I}.
  {M}anin. {V}ol. {II}}, volume 270 of {\em Progr. Math.}, pages 283--309.
  Birkh\"{a}user Boston, Boston, MA, 2009.

\bibitem{Popa}
Alexandru~A. Popa.
\newblock Central values of {R}ankin {$L$}-series over real quadratic fields.
\newblock {\em Compos. Math.}, 142(4):811--866, 2006.

\end{thebibliography}
\bibliographystyle{plain}
\newpage

\end{document}